\begin{document}

\newtheorem{theorem}{Theorem}
\newtheorem{lemma}{Lemma}
\newtheorem{corollary}{Corollary}
\newtheorem*{computation}{Computation}
\newtheorem{proposition}{Proposition}

\newtheorem{conj}{Conjecture}

\theoremstyle{definition}
\newtheorem*{definition}{Definition}
\newtheorem*{remark}{Remark}
\newtheorem*{example}{Example}


\def\cA{\mathcal A}
\def\cB{\mathcal B}
\def\cC{\mathcal C}
\def\cD{\mathcal D}
\def\cE{\mathcal E}
\def\cF{\mathcal F}
\def\cG{\mathcal G}
\def\cH{\mathcal H}
\def\cI{\mathcal I}
\def\cJ{\mathcal J}
\def\cK{\mathcal K}
\def\cL{\mathcal L}
\def\cM{\mathcal M}
\def\cN{\mathcal N}
\def\cO{\mathcal O}
\def\cP{\mathcal P}
\def\cQ{\mathcal Q}
\def\cR{\mathcal R}
\def\cS{\mathcal S}
\def\cU{\mathcal U}
\def\cT{\mathcal T}
\def\cV{\mathcal V}
\def\cW{\mathcal W}
\def\cX{\mathcal X}
\def\cY{\mathcal Y}
\def\cZ{\mathcal Z}


\def\sA{\mathscr A}
\def\sB{\mathscr B}
\def\sC{\mathscr C}
\def\sD{\mathscr D}
\def\sE{\mathscr E}
\def\sF{\mathscr F}
\def\sG{\mathscr G}
\def\sH{\mathscr H}
\def\sI{\mathscr I}
\def\sJ{\mathscr J}
\def\sK{\mathscr K}
\def\sL{\mathscr L}
\def\sM{\mathscr M}
\def\sN{\mathscr N}
\def\sO{\mathscr O}
\def\sP{\mathscr P}
\def\sQ{\mathscr Q}
\def\sR{\mathscr R}
\def\sS{\mathscr S}
\def\sU{\mathscr U}
\def\sT{\mathscr T}
\def\sV{\mathscr V}
\def\sW{\mathscr W}
\def\sX{\mathscr X}
\def\sY{\mathscr Y}
\def\sZ{\mathscr Z}


\def\fA{\mathfrak A}
\def\fB{\mathfrak B}
\def\fC{\mathfrak C}
\def\fD{\mathfrak D}
\def\fE{\mathfrak E}
\def\fF{\mathfrak F}
\def\fG{\mathfrak G}
\def\fH{\mathfrak H}
\def\fI{\mathfrak I}
\def\fJ{\mathfrak J}
\def\fK{\mathfrak K}
\def\fL{\mathfrak L}
\def\fM{\mathfrak M}
\def\fN{\mathfrak N}
\def\fO{\mathfrak O}
\def\fP{\mathfrak P}
\def\fQ{\mathfrak Q}
\def\fR{\mathfrak R}
\def\fS{\mathfrak S}
\def\fU{\mathfrak U}
\def\fT{\mathfrak T}
\def\fV{\mathfrak V}
\def\fW{\mathfrak W}
\def\fX{\mathfrak X}
\def\fY{\mathfrak Y}
\def\fZ{\mathfrak Z}


\def\C{{\mathbb C}}
\def\F{{\mathbb F}}
\def\K{{\mathbb K}}
\def\L{{\mathbb L}}
\def\N{{\mathbb N}}
\def\Q{{\mathbb Q}}
\def\R{{\mathbb R}}
\def\Z{{\mathbb Z}}


\def\eps{\varepsilon}
\def\mand{\qquad\mbox{and}\qquad}
\def\\{\cr}
\def\({\left(}
\def\){\right)}
\def\[{\left[}
\def\]{\right]}
\def\<{\langle}
\def\>{\rangle}
\def\fl#1{\left\lfloor#1\right\rfloor}
\def\rf#1{\left\lceil#1\right\rceil}
\def\le{\leqslant}
\def\ge{\geqslant}
\def\ds{\displaystyle}

\def\xxx{\vskip5pt\hrule\vskip5pt}
\def\yyy{\vskip5pt\hrule\vskip2pt\hrule\vskip5pt}
\def\imhere{ \xxx\centerline{\sc I'm here}\xxx }

\newcommand{\commB}[1]{\marginpar{
\vskip-\baselineskip \raggedright\footnotesize
\itshape\hrule\smallskip
\begin{color}{blue} #1\end{color}
\par\smallskip\hrule}}

\newcommand{\commI}[1]{\marginpar{
\vskip-\baselineskip \raggedright\footnotesize
\itshape\hrule\smallskip
\begin{color}{red} #1\end{color}
\par\smallskip\hrule}}


\def\e{\mathbf{e}}
\def\Gnp{G_n(p)}
\def\Gdp{G_d(p)}
\def\Gnq{G_n(q)}
\def\pow{{1-1/n}}


\title{\bf On Gauss sums and the evaluation of Stechkin's constant}

\author{
{\sc William D.~Banks} \\
{Department of Mathematics} \\
{University of Missouri} \\
{Columbia, MO 65211 USA} \\
{\tt bankswd@missouri.edu} 
\and
{\sc Igor E.~Shparlinski} \\
{Department of Pure Mathematics} \\
{University of New South Wales} \\
{Sydney, NSW 2052, Australia} \\
{\tt igor.shparlinski@unsw.edu.au}}

\date{\empty}
\pagenumbering{arabic}

\maketitle

\newpage

\begin{abstract}
For the Gauss sums which are defined by
$$
S_n(a,q):=\sum_{x\bmod q}\e(ax^n/q),
$$
Stechkin~(1975) conjectured that the quantity
$$
A:=\sup_{n,q\ge 2}~\max_{\gcd(a,q)=1}\frac{\bigl|S_n(a,q)\bigr|}{q^{1-1/n}}
$$
is finite. Shparlinski~ (1991) proved that $A$ is finite, but in the
absence of effective bounds on the sums $S_n(a,q)$ the precise
determination of $A$ has remained intractable for many years.
Using recent work of Cochrane and Pinner~(2011) on
Gauss sums with prime moduli, in this paper we show that with the constant given by
$$
A=\bigl|S_6(\hat a,\hat q)\bigr|/\hat q^{1-1/6}=4.709236\ldots,
$$
where $\hat a:=4787$ and
$\hat q:=4606056=2^3{\cdot}3^2{\cdot}7{\cdot}13{\cdot}19{\cdot}37$,
one has the sharp inequality 
$$
\bigl|S_n(a,q)\bigr|\le A\,q^{1-1/n}
$$
for all $n,q\ge 2$ and all $a\in\Z$ with $\gcd(a,q)=1$.
One interesting aspect of our method is that we apply
effective lower bounds for the center density in the sphere
packing problem due to Cohn and Elkies~(2003)
to optimize the running time of our primary computational algorithm.
\end{abstract}

\section{Introduction}

In this paper we study the Gauss sums defined by
$$
S_n(a,q):=\sum_{x\bmod q}\e(ax^n/q)\qquad(n,q\ge 2,~a\in\Z)
$$
where $\e(t):=\exp(2\pi it)$ for all $t\in\R$.  Since
$S_n(a,q)=dS_n(a/d,q/d)$ for any integer $d\ge 1$ that
divides both $a$ and $q$, for given $n$ and $q$ it is natural to investigate
the quantity
$$
\Gnq := \max_{\gcd(a,q)= 1} \bigl|S_n(a,q)\bigr|,
$$
which is the largest absolute value of the ``irreducible'' Gauss sums
for a given modulus $q$ and exponent $n$.
It is well known (see Stechkin~\cite{Ste}) that for some constant $C(n)$ 
that depends only on $n$ one has a bound of the form
$$
\Gnq \le C(n)\,q^{1-1/n}\qquad(q\ge 2),
$$
and therefore the number
$$
A(n) := \sup_{q\ge 2}~\Gnq/q^{1-1/n}
$$
is well-defined and finite for each $n\ge 2$.
Stechkin~\cite{Ste} showed that the bound
\begin{equation}
\label{eq:Ste}
A(n)\le \exp\bigl(O(\log\log 3n)^2\bigr)\qquad (n\ge 2),
\end{equation}
holds, and he conjectured that 
for some absolute constant $C$ one has
\begin{equation}
\label{eq:ConjSte}
A(n)\le C\qquad(n\ge 2).
\end{equation}
Shparlinski~\cite{Shp} proved Stechkin's conjecture in the stronger form
\begin{equation}
\label{eq:Shp}
A(n) = 1 + O(n^{-1/4 + \varepsilon})\qquad(n\ge 2).
\end{equation}
We remark that the estimate~\eqref{eq:Shp} has been subsequently strengthened
by Konyagin and Shparlinski
(see~\cite[Theorem~6.7]{KoSh}) to
\begin{equation}
\label{eq:KoSh}
A(n) = 1 + O\(n^{-1} \tau(n) \log n\)\qquad(n\ge 2), 
\end{equation}
where $\tau(\cdot)$ is the divisor function.
In the opposite direction, it has been shown in~\cite[Theorem~6.7]{KoSh}
that for infinitely many integers $n$ one has the lower bound
\begin{equation}
\label{eq:KoSh LB}
A(n) > 1+ n^{-1} \exp\(\frac{0.43\,\log n}{\log \log  n}\).
\end{equation}
We also note that the lower bound 
\begin{equation}
\label{eq:A1}
A(n)\ge 1
\end{equation}
holds for all $n\ge 2$ as one sees by applying \cite[Lemma~6.4]{KoSh}
with $m:=n$ and $q:=p$ for some prime $p\nmid n$.

The validity of~\eqref{eq:Shp}
leads naturally to the problem of determining the
exact value of \emph{Stechkin's constant}
$$
A:=\max_{n\ge 2} A(n),
$$
and it is this problem that is the focus of the present paper.

\begin{theorem}
\label{thm:main} 
We have $A(n)<A(6)$ for all $n\ge 2$, $n\ne 6$.  In particular,
with the constant
$$
A:=A(6)=\frac{\bigl|S_6(4787,4606056)\bigr|}{4606056^{\,5/6}}=4.70923685314526794358\ldots
$$
one has
$$
\bigl|S_n(a,q)\bigr|\le A\,q^{1-1/n}
$$
for all $n,q\ge 2$ and $a\in\Z$ with $\gcd(a,q)=1$.
\end{theorem}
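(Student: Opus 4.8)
The plan is to exploit the multiplicativity of the normalized quantity $\Gnq/q^{\pow}$ in order to reduce the determination of each $A(n)$ to a product of purely local (per-prime) contributions, and then to split the range of $n$ into a ``large'' range handled by the effective upper bounds already recorded and a ``small'' range handled by an explicit finite computation. First I would note that, by the Chinese Remainder Theorem, $S_n(a,q)$ factors over the prime-power decomposition of $q$, so that $G_n(\cdot)$ is multiplicative and hence
$$
A(n)=\prod_{p}\beta_n(p),\qquad
\beta_n(p):=\sup_{k\ge 0}\frac{G_n(p^k)}{p^{k\pow}}\ge 1,
$$
where the term $k=0$ always contributes the factor $1$. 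The decisive feature of this reduction is that $\beta_n(p)=1$ for all but finitely many primes: for $p\nmid n$ one has $G_n(p)\le(d-1)p^{1/2}$ with $d=\gcd(n,p-1)\le n$, so that $G_n(p)/p^{\pow}\le(n-1)\,p^{1/n-1/2}<1$ once $p$ is large, and an analogous estimate controls the higher prime powers. Thus each $A(n)$ is a finite product, its value is attained at a specific modulus, and the whole problem reduces to evaluating the local factors $\beta_n(p)$ and comparing across $n$.

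Next I would dispose of large $n$. Using the effective form of the estimate~\eqref{eq:KoSh}, namely $A(n)=1+O\(n^{-1}\tau(n)\log n\)$, I would make the implied constant explicit and extract a threshold $N_0$ with the property that $A(n)<A(6)$ for every $n>N_0$; since $A(6)=4.709\ldots$ is comfortably larger than $1$ while $A(n)\to 1$, such a threshold exists and should be of moderate size. This leaves only the finitely many exponents $2\le n\le N_0$ to be treated by direct computation.

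For each such $n$ I would evaluate $A(n)=\prod_p\beta_n(p)$ exactly. The local factors at primes $p\nmid n$ are governed by $d=\gcd(n,p-1)$ through the classical character-sum expression for $S_n(a,p)$ and its prime-power analogues, while the comparatively few primes $p\mid n$ require the effective bounds of Cochrane and Pinner on Gauss sums with prime moduli to pin down $\beta_n(p)$; in either case the supremum over $k$ is attained at a bounded exponent and is therefore computable. Enumerating the finitely many primes with $\beta_n(p)>1$, forming the product, and comparing the outcomes across $2\le n\le N_0$ should single out $n=6$ as the maximizer, with the optimal modulus assembled from exactly those primes whose local factor exceeds $1$, each raised to its optimal power---yielding $\hat q=2^3\cdot3^2\cdot7\cdot13\cdot19\cdot37$ and the stated value of $A=A(6)$. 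Because this value is attained at $a=\hat a$, $q=\hat q$, the inequality $\bigl|S_n(a,q)\bigr|\le A\,q^{\pow}$ is automatically sharp.

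The main obstacle is keeping the small-$n$ computation both rigorous and finite. Two difficulties dominate: turning the asymptotic bound on $A(n)$ into an explicit, usably small $N_0$; and bounding the number of primes and prime powers that must actually be inspected for each $n$, since a naive search over all $p\equiv 1\pmod d$ is prohibitive. It is here that I would invoke the Cohn--Elkies bounds on the center density in the sphere-packing problem: recasting the enumeration of relevant prime powers in terms of the associated lattice/packing data allows one to cap the search space and thereby optimize the running time of the primary algorithm. Certifying the delicate local factors at the primes $p\mid n$ via Cochrane--Pinner, and verifying that no unexamined prime can push any $A(n)$ above $A(6)$, are the steps I expect to demand the most care.
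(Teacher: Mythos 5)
Your overall skeleton---multiplicative reduction of $A(n)$ to finitely many local factors, a finite computation for small $n$, and an asymptotic elimination of large $n$---matches the paper's strategy (compare your $\prod_p\beta_n(p)$ with the decomposition~\eqref{eq:A(n)calc}). But the step you delegate to ``making the implied constant in~\eqref{eq:KoSh} explicit'' is precisely the gap the whole paper exists to fill, and your proposed route does not work. The estimate~\eqref{eq:KoSh} rests on the Heath-Brown--Konyagin bound~\cite{HBK}, which is not effective; this is exactly why the paper replaces it with the explicit Cochrane--Pinner bound~\eqref{eq:B(d,p)-defn}. Even granting Cochrane--Pinner, the resulting threshold is $n\ge 456000$ (via~\eqref{eq:coffee2} and Nicolas--Robin), not a ``moderate'' $N_0$, and for each $n$ below it the set of primes $p$ with $B(d,p)>p^{1-1/n}$ is far too large to enumerate directly---the paper stresses that this direct computation is infeasible. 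The missing ingredient is the new effective criteria of Propositions~\ref{prop:technical} and~\ref{prop:technical2} and Corollaries~\ref{cor:scissors}--\ref{cor:bear}, which certify $\Gdp\le p^{1-1/n}$ whenever $t=(p-1)/d$ is not too small, thereby collapsing the product in~\eqref{eq:A(n)calc} to primes with $t\le 172$ and making~\eqref{eq:coffee} available. Without something playing that role, your finite computation never terminates in practice, and your large-$n$ bound is not rigorous.

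Relatedly, your account of how sphere packing enters is not correct: it is not used to ``cap the search space of prime powers'' by recasting their enumeration in lattice terms. Rather, the lower bound on $\sum_x\llbracket g^{dx+y}\rrbracket_p^2$ behind Proposition~\ref{prop:technical2} involves the Hermite constant $\gamma_{r-1}$ through $C_r=(r\,\gamma_{r-1}^{r-1})^{-1/r}$, and the Cohn--Elkies bounds~\cite{CohnElkies} supply the needed numerical control on $\gamma_{29}$ in Corollary~\ref{cor:bear} (together with the Minkowski--Hlawka lower bound). That corollary is what lets the computation restrict to $p\le 8.5\times 10^6$ when $t\ge 173$; it is an ingredient in proving the pointwise estimate $\Gdp\le p^{1-1/n}$, not a device for pruning an enumeration. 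Your remaining points---finiteness of each local product, boundedness of the optimal exponent $k$ at each prime, and the assembly of the extremal modulus $\hat q$ from the primes with local factor exceeding one---are consistent with the paper, but they are the easy part.
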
 

The results described above have all been obtained by reducing
bounds for the general sums $\Gnq$ to bounds on sums $\Gnp$
with a prime modulus.  There are several different (and elementary) ways 
to show that the bound $\Gnp\le np^{1/2}$
holds (see, e.g., Lidl and Niederreiter~\cite[Theorem~5.32]{LN}), 
a result that plays the key role in Stechkin's proof of~\eqref{eq:Ste} in~\cite{Ste}.
Stechkin~\cite{Ste} observed that in order to prove the  conjecture~\eqref{eq:ConjSte}
one simply needs a bound on $\Gnp$ which remains nontrivial for all $n\le p^{\vartheta}$
with some fixed $\vartheta > 1/2$. 
The first bound of this type, valid for any fixed $\vartheta<4/7$,
is given in~\cite{Shp}; taken together with the argument of Stechkin~\cite{Ste} 
this
leads to~\eqref{eq:Shp}. An improvement by Heath-Brown and Konyagin (see~\cite[Theorem~1]{HBK})
of the principal result of~\cite{Shp}, along with some additional arguments,
leads to the stronger estimate~\eqref{eq:KoSh}; see~\cite[Chapter~6]{KoSh} for details.

The problem of determining $A$ explicitly involves much more effort than
that of simply performing a single direct computation.  The starting point in our
proof of Theorem~\ref{thm:main} is the replacement of the bound of~\cite[Theorem~1]{HBK}
with a more recent \emph{effective} bound on Gauss sums due to Cochrane and Pinner~\cite{CochPin};
see~\eqref{eq:B(d,p)-defn} below.  Using this bound one sees that each number
$A(n)$ can be computed in a finite number of steps.  However, the number of
steps required is quite huge even for small values of $n$, and the direct
computation of $A(n)$ is therefore exceedingly slow (especially when $n$ is prime).  It is infeasible to compute $A(n)$
over the entire range of values of $n$ that are needed to yield
the proof of Theorem~\ref{thm:main} directly from the bound
of~\cite{CochPin} 
combined with the argument of~\cite{Shp}. Instead,
to obtain Theorem~\ref{thm:main} we establish the upper bound
$A(n)<4.7$ for all $n>6$ using a combination
of previously known bounds and some new bounds.

Our underlying approach has been to modify and extend 
the techniques of~\cite[Chapter~6]{KoSh}
to obtain an effective version of~\cite[Theorem~6.7]{KoSh}. More precisely,
in Propositions~\ref{prop:technical} and~\ref{prop:technical2} we give general conditions
under which one can disregard the value of $\Gnp$ in computation of $A(n)$.  Special
cases of these results, stated as Corollaries~\ref{cor:scissors}--\ref{cor:bear}, have been used 
to perform the main computation described at the beginning of \S\ref{sec:numerical}.
An interesting aspect of our method is that Corollary~\ref{cor:bear}, which shows
that $\Gnp$ can be disregarded if $n\ge 2000$, $p\ge 8.5\times 10^6$, and
$(p-1)/\gcd(n,p-1)\ge 173$, essentially relies on effective lower bounds for the center
density in the sphere packing problem due to Cohn and Elkies~\cite{CohnElkies}.  In the
absence of these lower bounds, the running time of our primary computational algorithm
would have increased by a factor of at least one thousand.  We also remark that the
criteria presented in Corollaries~\ref{cor:scissors}--\ref{cor:cord} allow for
early termination of the program as the sums over $x$ in~\eqref{eq:bracketboundx}, 
\eqref{eq:bracketboundy} and~\eqref{eq:bracketboundz}
are monotonically increasing and avoid the use of complex numbers.

\section{Proof of Theorem~\ref{thm:main}}

\subsection{Theoretical results}
\label{sec:theoretical}

In what follows, the letter $n$ always denotes a natural number,
and the letter $p$ always denotes a prime number.  

We recall that $\Gnp=\Gdp$ holds whenever
$\gcd(n,p-1)=d$; see\break \cite[Lemma~6.6]{KoSh}.
Our main technical tool for proving Theorem~\ref{thm:main}
is the bound
\begin{equation}
\label{eq:B(d,p)-defn}
\Gdp\le B(d,p):=\min\{(d-1)p^{1/2},\lambda d^{5/8}p^{5/8},\lambda d^{3/8}p^{3/4}\}+1,
\end{equation}
where
$$
\lambda:=2\cdot 3^{-1/4}=1.519671\ldots;
$$
this is the main result of Cochrane and Pinner~\cite[Theorem~1.2]{CochPin}.

For a given prime $p$ and natural number $n$, let $v_p(n)$ denote the
greatest integer $m$ for which $p^m\mid n$ (that is, $v_p(\cdot)$ is the
usual $p$-adic valuation). Arguing as in~\cite[Chapter~6]{KoSh} we have
\begin{equation}
\label{eq:A(n)calc}
A(n)=A_1(n)A_2(n),
\end{equation}
where
\begin{align*}
A_1(n)&:=\prod_{p\,\mid\,n}\max_{1\le m\le v_p(n)+2}
\big\{\Gnp/p^{m(1-1/n)},1\big\},\\
A_2(n)&:=\prod_{d\,\mid\, n}
\prod_{\substack{p\,\nmid\,n\\ \gcd(n,p-1)=d\\ B(d,p)>p^{1-1/n}}}
\max\big\{\Gdp/p^{1-1/n},1\big\}.
\end{align*}
Note that for fixed $d$ and $n$ there are only finitely many primes $p$
for which $B(d,p)>p^{1-1/n}$.  For our purposes below, we recall that the bound
\begin{equation}
\label{eq:A(n)bd}
A(n)\le n^{3/n}A_2(n)
\end{equation}
holds; see~\cite[p.~42]{KoSh}.

\begin{lemma}
\label{lem:expsumest}
Let $b_1,\ldots,b_m$ be real numbers with $|b_j|<p/2$ for each $j$,
and suppose that
$$
\sum_{j=1}^m b_j^2\ge C.
$$
Then
\begin{equation}
\label{eq:realsumbd1}
\Re\sum_{j=1}^m\e(b_j/p)\le m-\frac{8C}{p^2},
\end{equation}
where $\Re\,z$ denotes the real part of $z\in\C$.
Moreover, if $|b_j|<p/4$ for each $j$, then
\begin{equation}
\label{eq:realsumbd2}
\Re\sum_{j=1}^m\e(b_j/p)\le m-\frac{16C}{p^2}.
\end{equation}
\end{lemma}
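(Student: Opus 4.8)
The plan is to pass to real parts and reduce everything to an elementary trigonometric inequality. Writing $\theta_j:=2\pi b_j/p$, we have $\Re\,\e(b_j/p)=\cos\theta_j$, so that
$$
\Re\sum_{j=1}^m\e(b_j/p)=\sum_{j=1}^m\cos\theta_j=m-\sum_{j=1}^m(1-\cos\theta_j).
$$
Hence it suffices to bound $\sum_j(1-\cos\theta_j)$ from below. Using the identity $1-\cos\theta=2\sin^2(\theta/2)$ together with the constraint $\sum_j b_j^2\ge C$, which in terms of the $\theta_j$ reads $\sum_j\theta_j^2=(4\pi^2/p^2)\sum_j b_j^2\ge 4\pi^2C/p^2$, the task becomes to lower-bound each term $2\sin^2(\theta_j/2)$ by a suitable multiple of $\theta_j^2$.

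The key ingredient is the monotonicity of $x\mapsto(\sin x)/x$ on $(0,\pi)$: since this function is decreasing there, on any interval $(0,x_0]$ one has $\sin x\ge \kappa\, x$ with $\kappa=(\sin x_0)/x_0$. In the first case the hypothesis $|b_j|<p/2$ gives $|\theta_j/2|<\pi/2$, so I would apply this with $x_0=\pi/2$ and $\kappa=2/\pi$ (the classical Jordan inequality), yielding $\sin^2(\theta_j/2)\ge(4/\pi^2)(\theta_j/2)^2=\theta_j^2/\pi^2$ and hence $2\sin^2(\theta_j/2)\ge 2\theta_j^2/\pi^2$. Summing and substituting the bound on $\sum_j\theta_j^2$ produces
$$
\sum_{j=1}^m(1-\cos\theta_j)\ge\frac{2}{\pi^2}\sum_{j=1}^m\theta_j^2\ge\frac{2}{\pi^2}\cdot\frac{4\pi^2 C}{p^2}=\frac{8C}{p^2},
$$
which is exactly~\eqref{eq:realsumbd1}.

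For the sharper bound~\eqref{eq:realsumbd2}, the stronger hypothesis $|b_j|<p/4$ confines $\theta_j/2$ to the interval $(-\pi/4,\pi/4)$, so I would instead take $x_0=\pi/4$, for which $\kappa=(\sin(\pi/4))/(\pi/4)=2\sqrt2/\pi$. This gives $\sin^2(\theta_j/2)\ge(8/\pi^2)(\theta_j/2)^2=2\theta_j^2/\pi^2$, hence $2\sin^2(\theta_j/2)\ge 4\theta_j^2/\pi^2$, and repeating the summation doubles the constant to yield $\sum_j(1-\cos\theta_j)\ge 16C/p^2$. The whole argument is elementary; the only point requiring any care---and the closest thing to an obstacle---is verifying that the relevant range of $\theta_j/2$ stays inside $(0,\pi)$ so that the monotonicity of $(\sin x)/x$ applies, together with bookkeeping the constants so that the endpoint values $2/\pi$ and $2\sqrt2/\pi$ deliver precisely the factors $8$ and $16$.
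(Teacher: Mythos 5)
Your proof is correct and follows essentially the same route as the paper: the paper reduces both bounds to the pointwise inequalities $\cos(2\pi u)\le 1-8u^2$ on $[-\tfrac12,\tfrac12]$ and $\cos(2\pi u)\le 1-16u^2$ on $[-\tfrac14,\tfrac14]$ (citing~\cite[Lemma~4.1]{KoSh} for the first), and your derivation via $1-\cos\theta=2\sin^2(\theta/2)$ and Jordan's inequality is exactly a proof of those same inequalities with the correct constants. No gaps.
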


\begin{proof}
The first bound~\eqref{eq:realsumbd1} is~\cite[Lemma~4.1]{KoSh};
the proof is based on the inequality $\cos(2\pi u)\le 1-8u^2$
for $u\in[-\tfrac12,\tfrac12]$. The second bound~\eqref{eq:realsumbd2}
is proved similarly using the inequality $\cos(2\pi u)\le 1-16u^2$
for $u\in[-\tfrac14,\tfrac14]$.
\end{proof}

To state the next result, we introduce some notation.  As usual,
we denote by $\varphi(\cdot)$ the Euler function.  In what follows,
for a fixed odd prime $p$ and any $b\in\Z$ we denote by
$\llbracket b\rrbracket_p$ the unique integer such that
$b\equiv \llbracket b\rrbracket_p\pmod p$ and
$-p/2<\llbracket b\rrbracket_p<p/2$.  We also
denote by $g$ a fixed generator of the multiplicative group $\F_p^*$
of the finite field $\F_p:=\Z/p\Z$.

For any $r\ge 2$ let
$$
C_r:=\(r\,\gamma_{r-1}^{r-1}\)^{-1/r}\mand
K_r:=4(1-1/f_r)C_r
$$
where $\gamma_k$ denotes the $k$-th Hermite constant
(see Conway and Sloane~\cite{ConSlo}),
and $f_r$ is the least natural number such that $\varphi(f_r)\ge r$.

Finally,
for fixed $n$ and $p$ we put $d:=\gcd(n,p-1)$ and $t:=(p-1)/d$.

\begin{proposition}
\label{prop:technical}
Fix $n$, $p$ and $\Theta\ge 1$. Suppose that $\varphi(t)\ge r\ge 2$, that
the inequalities
\begin{equation}
\label{eq:bracketbound}
\sum_{x=1}^{t} \llbracket g^{dx+y}\rrbracket_p^2
\ge \Theta F_r(t,p)
\qquad (1\le y\le d)
\end{equation}
hold with
$$
F_r(t,p):=\biggl(\frac{p^{2(r-1)}t}{r\, \gamma_{r-1}^{r-1}}\biggr)^{1/r}=C_rp^{2-2/r}t^{1/r},
$$
and that the inequality
\begin{equation}
\label{eq:pbound}
p^{1-1/n}\ge 1+\lambda\biggl(\frac{p^{5}\log p}{\Theta K_r(p-1)^{1/r}}\biggr)^{3r/(16r-8)}
\end{equation}
holds. Then $\Gdp\le p^{1-1/n}$.
\end{proposition}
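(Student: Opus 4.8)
The plan is to reduce the evaluation of $\Gdp$ to a single multiplicative coset sum and then to bound that sum by feeding the second-moment hypothesis~\eqref{eq:bracketbound} into Lemma~\ref{lem:expsumest}. Since $\Gnp=\Gdp$ and $d\mid p-1$, the map $x\mapsto x^d$ is $d$-to-one from $\F_p^*$ onto the subgroup $H$ of $d$-th powers, which has order $t=(p-1)/d$; consequently, for any $a$ with $\gcd(a,p)=1$,
$$S_d(a,p)=1+d\sum_{u\in aH}\e(u/p),$$
a quantity that depends only on the coset of $a$ modulo $H$. Writing $a\equiv g^{y}\pmod p$ and identifying $aH$ with $\{g^{dx+y}:1\le x\le t\}$, I obtain
$$\Gdp=\max_{1\le y\le d}\bigl|1+dW_y\bigr|\le 1+d\max_{1\le y\le d}|W_y|,\qquad W_y:=\sum_{x=1}^{t}\e(g^{dx+y}/p).$$
It therefore suffices to prove that $d\,|W_y|\le\lambda\bigl(p^{5}\log p/(\Theta K_r(p-1)^{1/r})\bigr)^{3r/(16r-8)}$ for every $y$, since hypothesis~\eqref{eq:pbound} then gives $\Gdp\le p^{1-1/n}$ at once.

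The heart of the argument is thus a power-saving bound on the coset sum $W_y$ in terms of the second moment $M_y:=\sum_{x=1}^{t}\llbracket g^{dx+y}\rrbracket_p^2$, which by~\eqref{eq:bracketbound} satisfies $M_y\ge\Theta F_r(t,p)$. The quantitative engine is Lemma~\ref{lem:expsumest}: the cosine inequality converts a lower bound on the second moment of a (possibly dilated) coset into an upper bound on the associated exponential sum, the principle being that such a sum is small exactly when its residues are spread out. By itself the lemma only yields $\Re W_y\le t-8\Theta F_r(t,p)/p^2$, which is far too weak; the power saving comes from exploiting that $H$ is a group, so that $W_y=\sum_{x}\e(hg^{dx+y}/p)$ for every $h\in H$, together with an $(r-1)$-dimensional geometry-of-numbers argument in which the Hermite constant $\gamma_{r-1}$ controls how small the second moment of $r-1$ simultaneous dilates can be. This is precisely the mechanism of Cochrane and Pinner behind~\eqref{eq:B(d,p)-defn}; the content of the proposition is to run it with the assumed second moment $\Theta F_r(t,p)$ in place of their generic lower bound and with $r$ kept as a free parameter. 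Tracking $\Theta$ through this estimate and optimizing the dilation family against the error it introduces is what produces the exponent $3r/(16r-8)$, the normalizing constant $K_r=4(1-1/f_r)C_r$, and the single factor $\log p$; the hypothesis $\varphi(t)\ge r$ guarantees that enough genuinely distinct dilates are available to form the lattice, and $f_r$ enters through the admissible range of dilates, explaining the factor $1-1/f_r$.

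The main obstacle is the passage from the local, real-part control supplied by Lemma~\ref{lem:expsumest} to a genuine power-saving bound on $|W_y|$, strong enough that $d\,|W_y|$ falls below the stated target. Because the lemma sees only $\Re(\e(-\theta)W_y)$ after rotation, the second moment must be harvested uniformly over the a priori unknown argument $\theta$ of $W_y$, which is exactly what forces the averaging over dilates and the lattice estimate, and is where $\lambda=2\cdot3^{-1/4}$ and $\gamma_{r-1}$ enter through the geometry of the associated lattice. A secondary, purely bookkeeping difficulty is to carry $C_r$, $K_r$, and the $\log p$ exactly through the optimization so as to land on~\eqref{eq:pbound}, with the benchmark that setting $\Theta=1$ and choosing $r$ optimally should recover the Cochrane--Pinner term $\lambda d^{3/8}p^{3/4}$.
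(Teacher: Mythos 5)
Your reduction to the coset sums $W_y$ and the formula $\Gdp\le 1+d\max_{1\le y\le d}|W_y|$ match the paper, but the core of your plan rests on a target that is both unprovable from the hypotheses and stronger than the truth. You propose to show $d\,|W_y|\le\lambda\bigl(p^{5}\log p/(\Theta K_r(p-1)^{1/r})\bigr)^{3r/(16r-8)}$, i.e.\ to use \eqref{eq:pbound} as a direct ceiling for $1+d\max_y|W_y|$. That quantity is roughly a fixed power of $p$ strictly less than $1$ (about $p^{0.95}$ for $r=25$, up to logarithms), whereas the second-moment hypothesis \eqref{eq:bracketbound}, fed through Lemma~\ref{lem:expsumest}, can only produce a small \emph{additive} saving off the trivial bound $t$: the conclusion of that mechanism is $\Gdp\le 1+d\bigl(t-4\Theta F_r(t,p)(1-1/t)p^{-2}\bigr)$, i.e.\ $\Gdp\le p-\Theta K_rd^{1-1/r}p^{-2/r}(p-1)^{1/r}$, in which the subtracted term is at most about $p^{1-2/r}$. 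When $d$ is large and $t$ is small, $d\,|W_y|$ really can be as large as $p$ minus such a small saving, so your intermediate claim is simply false in general; no amount of averaging over dilates or geometry of numbers will rescue it, because $\Gdp$ itself need not be below your target --- it only needs to be below $p^{1-1/n}$, which for large $n$ is very close to $p$.

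What you are missing is the case split and the actual role of \eqref{eq:pbound}. The paper first disposes of the case $B(d,p)\le p^{1-1/n}$ using the Cochrane--Pinner bound \eqref{eq:B(d,p)-defn} as a black box (it is an input here, not something to be re-derived, contrary to your ``benchmark'' remark). In the remaining case one has the chain $1+\lambda d^{3/8}p^{3/4}\ge B(d,p)>p^{1-1/n}\ge 1+\lambda\bigl(p^{5}\log p/(\Theta K_r(p-1)^{1/r})\bigr)^{3r/(16r-8)}$, which after rearrangement yields $\Theta K_rd^{1-1/r}p^{-2/r}(p-1)^{1/r}\ge d^{-1}p\log p$: that is, \eqref{eq:pbound} is used to show the \emph{saving term is large enough}, not that the sum is small. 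Combined with the display above this gives $\Gdp\le p-d^{-1}p\log p\le p^{1-1/d}\le p^{1-1/n}$. This is where the exponent $3r/(16r-8)$ and the single $\log p$ come from --- the algebra of inverting $d^{3/8}p^{3/4}$ against the saving term --- and where $\lambda$ enters, namely through $B(d,p)$ rather than through any lattice geometry. Your proposal contains none of these steps, so as written it cannot be completed.
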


\begin{proof}
We can assume that $B(d,p)>p^{1-1/n}$, for otherwise the result
follows immediately from~\eqref{eq:B(d,p)-defn}.

Write
$$
\Gdp=1+d\max_{1\le y\le d}\biggl|\,\sum_{x=1}^t\e(g^{dx+y}/p)\biggr|.
$$
Replacing $F_r(t,p)$ with $\Theta F_r(t,p)$ in the proof of~\cite[Theorem~4.2]{KoSh},
taking into account~\eqref{eq:bracketbound} and our hypothesis that
$\varphi(t)\ge r\ge 2$, we see that
\begin{equation}
\label{eq:bonkers}
\Gdp\le 1+d\bigl(t-4\Theta F_r(t,p)(1-1/t)p^{-2}\bigr).
\end{equation}
Since $t\ge f_r$ it follows that
$$
\Gdp\le 1+d\bigl(t-4\Theta C_rp^{-2/r}t^{1/r}(1-1/t)\bigr)
\le p-\Theta K_rdp^{-2/r}t^{1/r},
$$
and recalling that $t:=(p-1)/d$ this leads to the bound
\begin{equation}
\label{eq:vase}
\Gdp\le p-\Theta K_rd^{1-1/r}p^{-2/r}(p-1)^{1/r}.
\end{equation}

On the other hand, combining~\eqref{eq:B(d,p)-defn} and~\eqref{eq:pbound} we have
$$
1+\lambda d^{3/8}p^{3/4}\ge B(d,p)>p^{1-1/n}\ge
1+\lambda\biggl(\frac{p^{5}\log p}{\Theta K_r(p-1)^{1/r}}\biggr)^{3r/(16r-8)},
$$
which in turn yields the inequality
$$
\Theta K_rd^{24/25}p^{-2/25}(p-1)^{1/25}\ge d^{-1}p\log p.
$$
In view of~\eqref{eq:vase} we deduce that
$$
\Gdp\le p-d^{-1}p\log p\le p^{1-1/d}\le p^{1-1/n}
$$
as required.
\end{proof}

\begin{corollary}
\label{cor:scissors}
Suppose that $n\ge 2000$, $\varphi(t)\ge 25$, $p\ge 375000$, and
\begin{equation}
\label{eq:bracketboundx}
\sum_{x=1}^{t} \llbracket g^{dx+y}\rrbracket_p^2
\ge p^{48/25}t^{1/25} \qquad (1\le y\le d).
\end{equation}
Then $\Gdp\le p^{1-1/n}$.
\end{corollary}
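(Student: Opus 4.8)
The plan is to deduce this directly from Proposition~\ref{prop:technical} by specializing the two free parameters to $r:=25$ and $\Theta:=C_{25}^{-1}$. With these choices the quantity $\Theta F_{25}(t,p)$ collapses to a clean expression: since $F_{25}(t,p)=C_{25}\,p^{48/25}t^{1/25}$, we get $\Theta F_{25}(t,p)=p^{48/25}t^{1/25}$, so that hypothesis~\eqref{eq:bracketbound} of the proposition becomes verbatim the hypothesis~\eqref{eq:bracketboundx} of the corollary. I would first check that this choice of $\Theta$ is admissible, i.e.\ that $\Theta\ge 1$: because $\gamma_{24}\ge 1$ we have $25\,\gamma_{24}^{24}\ge 25>1$, hence $C_{25}<1$ and $\Theta=C_{25}^{-1}>1$. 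The hypothesis $\varphi(t)\ge 25$ is exactly $\varphi(t)\ge r$, so with $r=25$ and $\Theta=C_{25}^{-1}$ the only remaining thing to establish is inequality~\eqref{eq:pbound}; once that is in hand, Proposition~\ref{prop:technical} yields $\Gdp\le p^{1-1/n}$.

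The key simplification is that the factor $C_{25}$ cancels in the product $\Theta K_{25}$. Indeed $K_{25}=4(1-1/f_{25})C_{25}$, so $\Theta K_{25}=C_{25}^{-1}\cdot 4(1-1/f_{25})C_{25}=4(1-1/f_{25})$, and the value of the Hermite constant never enters~\eqref{eq:pbound}. Here $f_{25}$ is the least natural number with $\varphi(f_{25})\ge 25$; since $\varphi(m)\le m-1\le 24$ for $m\le 25$ and $\varphi(26)=12$, $\varphi(27)=18$, $\varphi(28)=12$, while $29$ is prime with $\varphi(29)=28$, one finds $f_{25}=29$ and hence $\Theta K_{25}=4(1-1/29)=112/29$. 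With $r=25$ we also have $3r/(16r-8)=75/392$, so~\eqref{eq:pbound} takes the explicit form
$$
p^{1-1/n}\ge 1+\lambda\biggl(\frac{29\,p^{5}\log p}{112\,(p-1)^{1/25}}\biggr)^{75/392}.
$$

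It therefore remains to verify this single inequality for all $n\ge 2000$ and all $p\ge 375000$. Since the right-hand side is independent of $n$, I would bound the left-hand side from below using $1-1/n\ge 1-1/2000$, reducing to the $n$-free inequality $p^{1-1/2000}\ge 1+\lambda(\cdots)^{75/392}$. Comparing exponents of $p$, the left-hand side grows like $p^{1-1/2000}=p^{0.9995\ldots}$ whereas the bracket on the right, being $O(p^{5-1/25})$ up to the slowly varying factor $\log p$, contributes an effective exponent $(5-\tfrac1{25})\cdot\tfrac{75}{392}=\tfrac{372}{392}<0.95$; thus the left-hand side eventually dominates. The plan is to make this rigorous by showing the difference of the two sides is increasing in $p$ on $[375000,\infty)$ (e.g.\ via a derivative or a ratio estimate that controls the $\log p$ factor), and then to confirm the inequality at the single point $p=375000$. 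I expect this last threshold check to be the main obstacle: a direct estimate shows the two sides at $p=375000$ agree to within a fraction of a percent, so the verification of~\eqref{eq:pbound} is genuinely delicate and must be carried out with carefully controlled numerical bounds rather than crude estimates — this tightness is precisely why the hypothesis is stated with the specific threshold $p\ge 375000$.
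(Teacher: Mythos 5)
Your proof follows the paper's own argument exactly: the paper likewise takes $r=25$ and $\Theta=C_{25}^{-1}$ in Proposition~\ref{prop:technical} (so that~\eqref{eq:bracketbound} becomes~\eqref{eq:bracketboundx}) and then verifies~\eqref{eq:pbound}, which with $\Theta K_{25}=112/29$ and exponent $3r/(16r-8)=75/392$ is precisely the explicit inequality you write down. Your supporting details --- that $\Theta\ge 1$, that $f_{25}=29$ so the Hermite constant cancels out of~\eqref{eq:pbound}, and that the endpoint check at $p=375000$ is numerically tight --- are all correct and simply flesh out the paper's terser ``we verify that~\eqref{eq:pbound} holds.''
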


\begin{proof}
Taking $r:=25$ in the statement of Proposition~\ref{prop:technical},
we observe that
$$
\gamma_{24}=4,\qquad
C_{25}=(5\cdot 2^{24})^{-2/25},\qquad
K_{25}=\tfrac{112}{29}\,C_{25}.
$$
We put $\Theta:=C_{25}^{-1}$ so that~\eqref{eq:bracketbound} and~\eqref{eq:bracketboundx}
are equivalent, and then we verify that the inequality~\eqref{eq:pbound} holds under
the conditions of the corollary.
\end{proof}

Similarly, with the choice $\Theta:=2C_{25}^{-1}$ we obtain the following statement.

\begin{corollary}
\label{cor:string}
Suppose that $n\ge 2000$, $\varphi(t)\ge 25$, $p\ge 6500$, and
\begin{equation}
\label{eq:bracketboundy}
\sum_{x=1}^{t} \llbracket g^{dx+y}\rrbracket_p^2
\ge 2\,p^{48/25}t^{1/25} \qquad (1\le y\le d).
\end{equation}
Then $\Gdp\le p^{1-1/n}$.
\end{corollary}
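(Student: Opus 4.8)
The plan is to invoke Proposition~\ref{prop:technical} with $r:=25$, exactly as in the proof of Corollary~\ref{cor:scissors}, but now with the larger value $\Theta:=2C_{25}^{-1}$. First I would record the relevant constants. Since the $24$-dimensional Hermite constant is $\gamma_{24}=4$, we have $C_{25}=(5\cdot 2^{24})^{-2/25}$; moreover $f_{25}=29$ (as $29$ is the least integer with $\varphi\ge 25$, so that $4(1-1/29)=112/29$), whence $K_{25}=\tfrac{112}{29}C_{25}$. With the choice $\Theta:=2C_{25}^{-1}$ one computes
$$
\Theta F_{25}(t,p)=2C_{25}^{-1}\cdot C_{25}\,p^{2-2/25}t^{1/25}=2\,p^{48/25}t^{1/25},
$$
so the hypothesis~\eqref{eq:bracketboundy} of the corollary is precisely the hypothesis~\eqref{eq:bracketbound} of Proposition~\ref{prop:technical}, while $\varphi(t)\ge 25=r$ is assumed outright. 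It therefore remains only to verify that the inequality~\eqref{eq:pbound} holds throughout the stated range $n\ge 2000$, $p\ge 6500$.

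For this verification I would first dispose of the dependence on $n$: since $p\ge 2$, the left-hand side $p^{1-1/n}$ is increasing in $n$, so it suffices to treat the extremal case $n=2000$, where $p^{1-1/n}\ge p^{1999/2000}$. Next I would exploit the cancellation afforded by our constants, namely $\Theta K_{25}=2C_{25}^{-1}\cdot\tfrac{112}{29}C_{25}=\tfrac{224}{29}$, so that $C_{25}$ drops out entirely. With $r=25$ and $3r/(16r-8)=75/392$, the inequality~\eqref{eq:pbound} then reduces to the fully explicit statement
$$
p^{1999/2000}\ge 1+\lambda\biggl(\frac{29\,p^{5}\log p}{224\,(p-1)^{1/25}}\biggr)^{75/392}
\qquad(p\ge 6500).
$$
This is where doubling $\Theta$ pays off: relative to Corollary~\ref{cor:scissors} the right-hand side (less the additive $1$) is scaled down by the factor $2^{75/392}$, and this is the mechanism that permits the admissible lower bound on $p$ to be lowered from $375000$ to $6500$.

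The main work, and the step I expect to be the principal obstacle, is establishing this last inequality for every $p\ge 6500$. Comparing growth rates, the dominant term on the right-hand side grows like $p^{3(5\cdot 25-1)/(16\cdot 25-8)}=p^{93/98}$ (up to a slowly varying logarithmic factor), while the left-hand side grows like $p^{1999/2000}$; since $93/98<1999/2000$ the inequality holds comfortably for all large $p$, and the real content is to confirm that it already holds at the threshold $p=6500$ and is not violated in between. I would handle this by a monotonicity argument: writing the inequality as $p^{1999/2000}-1\ge\lambda(\,\cdots\,)^{75/392}$ and taking logarithms of the two positive sides, one shows that the difference of the logarithms is increasing on $[6500,\infty)$—its derivative is positive there because the exponent gap $1999/2000-93/98$ outweighs the logarithmic correction—and then one checks the single numerical inequality at $p=6500$ directly. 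With~\eqref{eq:pbound} thus confirmed, Proposition~\ref{prop:technical} yields $\Gdp\le p^{1-1/n}$, completing the proof.
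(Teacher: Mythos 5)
Your proposal is correct and follows exactly the route the paper takes: invoke Proposition~\ref{prop:technical} with $r:=25$ and $\Theta:=2C_{25}^{-1}$ (so that \eqref{eq:bracketboundy} becomes \eqref{eq:bracketbound}), then verify \eqref{eq:pbound} for $n\ge 2000$ and $p\ge 6500$. Your explicit reduction of \eqref{eq:pbound} to a single numerical check at $p=6500$ plus a monotonicity argument simply fills in the verification that the paper leaves implicit, and the constants ($C_{25}$, $f_{25}=29$, $K_{25}=\tfrac{112}{29}C_{25}$, exponent $75/392$) all check out.
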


\begin{corollary}
\label{cor:cord}
Suppose that $n\ge 2000$, $\varphi(t)\ge 10$, $p\ge 8000$, and
\begin{equation}
\label{eq:bracketboundz}
\sum_{x=1}^{t} \llbracket g^{dx+y}\rrbracket_p^2
\ge 13\,p^{16/9}t^{1/9} \qquad (1\le y\le d).
\end{equation}
Then $\Gdp\le p^{1-1/n}$.
\end{corollary}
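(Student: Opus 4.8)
The plan is to apply Proposition~\ref{prop:technical} with $r:=9$, exactly as in the proofs of Corollaries~\ref{cor:scissors} and~\ref{cor:string} but now exploiting the exceptionally favorable Hermite constant in dimension eight. First I would assemble the constants attached to $r=9$. Since $\gamma_8=2$ we have $\gamma_8^8=256$ and hence $C_9=(9\cdot 2^8)^{-1/9}=2304^{-1/9}$. Because $\varphi(k)$ is even for every $k\ge 3$, the hypothesis $\varphi(t)\ge 10$ of the corollary is equivalent to the condition $\varphi(t)\ge 9$ required by the proposition; moreover the least integer with totient at least $9$ is $f_9=11$, as $\varphi(11)=10$ while $\varphi(k)\le 6$ for every $k\le 10$. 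Consequently $K_9=4\bigl(1-\tfrac1{11}\bigr)C_9=\tfrac{40}{11}C_9$. Choosing $\Theta:=13\,C_9^{-1}$ turns $\Theta F_9(t,p)$ into $13\,p^{16/9}t^{1/9}$, so that the hypothesis~\eqref{eq:bracketbound} becomes precisely~\eqref{eq:bracketboundz}; with this choice $\Theta K_9=\tfrac{520}{11}$, while $3r/(16r-8)=27/136$.

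It then remains only to verify~\eqref{eq:pbound}, which in the present situation reads
$$
p^{1-1/n}\ge 1+\lambda\biggl(\frac{11\,p^{5}\log p}{520\,(p-1)^{1/9}}\biggr)^{27/136},
$$
for all $n\ge 2000$ and $p\ge 8000$. As the left-hand side is increasing in $n$ while the right-hand side does not involve $n$, it suffices to treat the extremal case $n=2000$, in which the left-hand side equals $p^{1999/2000}$. The task thus reduces to the single-variable estimate
$$
p^{1999/2000}\ge 1+\lambda\biggl(\frac{11\,p^{5}\log p}{520\,(p-1)^{1/9}}\biggr)^{27/136}\qquad(p\ge 8000).
$$

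To establish this I would first record the governing growth rates. Writing $\Phi(p):=11\,p^{5}\log p\big/\bigl(520\,(p-1)^{1/9}\bigr)$ and $R(p):=1+\lambda\,\Phi(p)^{27/136}$, a direct computation gives
$$
\frac{\Phi'(p)}{\Phi(p)}=\frac5p+\frac1{p\log p}-\frac1{9(p-1)},
$$
so that $\Phi'/\Phi\sim \tfrac{44}{9p}$ as $p\to\infty$ and $R(p)$ grows like $(\log p)^{27/136}p^{33/34}$, whereas the left-hand side grows like $p^{1999/2000}$. Since $\tfrac{1999}{2000}>\tfrac{33}{34}$, the left-hand side ultimately dominates; to make this effective from $p=8000$ onward I would verify the inequality numerically at the endpoint $p=8000$ (where the two sides are roughly $7.96\times10^{3}$ and $6.71\times10^{3}$) and then show that the ratio of the two sides increases on $[8000,\infty)$. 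For the latter it is enough to compare logarithmic derivatives: the left-hand side, being the pure power $p^{1999/2000}$, has logarithmic derivative $\tfrac{1999}{2000p}$, while the additive constant $1$ in $R$ only decreases its logarithmic derivative, giving $R'/R<\tfrac{27}{136}\,\Phi'/\Phi$. Thus it suffices to check that
$$
\frac{27}{136}\biggl(\frac5p+\frac1{p\log p}-\frac1{9(p-1)}\biggr)\le\frac{1999}{2000p}\qquad(p\ge 8000).
$$

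The main obstacle is precisely this last comparison, which is genuinely delicate rather than a crude power count. The dominant term contributes $\tfrac{27}{136}\cdot\tfrac5p=\tfrac{135}{136p}$, and although $\tfrac{135}{136}<\tfrac{1999}{2000}$ the margin is only about $0.007/p$; at the critical value $p=8000$ the two correction terms $\tfrac1{p\log p}$ and $\tfrac1{9(p-1)}$ very nearly cancel (both close to $1.39\times10^{-5}$, since $\log 8000\approx 8.99$), so the whole left-hand side sits just below $\tfrac{135}{136p}$ and the inequality holds by the slender but positive margin above. For $p\gtrsim e^{9}$ one has $\log p>9-9/p$, whence $\tfrac1{p\log p}<\tfrac1{9(p-1)}$ and the correction becomes negative, widening the margin toward the comfortable asymptotic gap $\tfrac{1999}{2000}-\tfrac{33}{34}$. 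Once this logarithmic-derivative inequality and the endpoint check are confirmed, the ratio of the two sides is increasing and exceeds $1$ at $p=8000$, so~\eqref{eq:pbound} holds throughout; Proposition~\ref{prop:technical} then yields $\Gdp\le p^{1-1/n}$, completing the proof.
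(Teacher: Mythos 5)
Your proposal is correct and follows the same route as the paper: it applies Proposition~\ref{prop:technical} with $r=9$, computes $\gamma_8=2$, $C_9=(9\cdot 2^8)^{-1/9}=48^{-2/9}$, $f_9=11$, $K_9=\tfrac{40}{11}C_9$, takes $\Theta:=13\,C_9^{-1}$ to match~\eqref{eq:bracketboundz} with~\eqref{eq:bracketbound}, and then verifies~\eqref{eq:pbound}. The paper leaves the verification of~\eqref{eq:pbound} as a one-line assertion, whereas you supply the endpoint check at $p=8000$, $n=2000$ and the logarithmic-derivative monotonicity argument explicitly; both of these are sound.
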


\begin{proof}
Taking $r:=9$ in the statement of Proposition~\ref{prop:technical},
we observe that
$$
\gamma_{8}=2,\qquad
C_{9}=48^{-2/9},\qquad
K_{9}=\tfrac{40}{11}\,C_{9}.
$$
We put $\Theta:=13\,C_{9}^{-1}$ so that~\eqref{eq:bracketbound} and~\eqref{eq:bracketboundz}
are equivalent, and then we verify that the inequality
\eqref{eq:pbound} holds under the conditions of the corollary.
\end{proof}

When $\Theta:=1$ the bound~\eqref{eq:bracketbound} holds for
$\varphi(t)\ge r\ge 2$ as is demonstrated in the proof
of~\cite[Lemma~4.2]{KoSh}.  Moreover, in this case we have the
following variant of Proposition~\ref{prop:technical}.

\begin{proposition}
\label{prop:technical2}
Fix $n$ and $p$. Suppose that $\varphi(t)\ge r\ge 2$,
and that the inequalities
\begin{equation}
\label{eq:tbound2}
p^{2/r}t^{1-1/r}>32rC_r
\end{equation}
and
\begin{equation}
\label{eq:pbound2}
p^{1-1/n}\ge 1+\lambda
\biggl(\frac{p^{5}\log p}{2K_r(p-1)^{1/r}}\biggr)^{3r/(16r-8)}
\end{equation}
hold. Then $\Gdp\le p^{1-1/n}$.
\end{proposition}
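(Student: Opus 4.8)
The plan is to run the argument of Proposition~\ref{prop:technical} in the special case $\Theta=1$, but to extract an extra factor of $2$ in the key estimate so that the \emph{weaker} hypothesis~\eqref{eq:pbound2}---which carries $2K_r$ where the $\Theta=1$ instance of~\eqref{eq:pbound} carries only $K_r$---still suffices to close the argument. As in Proposition~\ref{prop:technical} I would first dispose of the trivial case: if $B(d,p)\le p^{1-1/n}$ the conclusion is immediate from~\eqref{eq:B(d,p)-defn}, so I may assume $B(d,p)>p^{1-1/n}$. Because $\Theta=1$, the remark preceding the statement (via~\cite[Lemma~4.2]{KoSh}) guarantees that the inequalities~\eqref{eq:bracketbound} hold automatically for all $1\le y\le d$, so no assumption on the sums $\sum_x\llbracket g^{dx+y}\rrbracket_p^2$ is required; this is exactly why~\eqref{eq:tbound2} and~\eqref{eq:pbound2} are the only hypotheses needed.

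The essential new ingredient is the recovery of the factor of $2$. I would obtain it by reworking the estimate~\eqref{eq:bonkers}, replacing the use of the first bound~\eqref{eq:realsumbd1} of Lemma~\ref{lem:expsumest} by its sharper counterpart~\eqref{eq:realsumbd2}; the latter improves the quadratic correction by a factor of $2$ (the constant $16$ replacing $8$), which propagates through the proof of~\cite[Theorem~4.2]{KoSh} to give
$$
\Gdp\le p-2K_rd^{1-1/r}p^{-2/r}(p-1)^{1/r}
$$
in place of the $\Theta=1$ instance of~\eqref{eq:vase}. The price of~\eqref{eq:realsumbd2} is that it is valid only when the relevant arguments $b_j$ lie in $(-p/4,p/4)$, and this is precisely the role of~\eqref{eq:tbound2}: the inequality $p^{2/r}t^{1-1/r}>32rC_r$ is what lets me confine attention to residues in this narrower range, handling separately the comparatively few residues with $|b_j|\ge p/4$ (which contribute nonpositively to the cosine sum and so only help the bound). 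I expect this to be the main obstacle, since it is not a formal substitution: one must reexamine the derivation of~\cite[Theorem~4.2]{KoSh} and verify, using~\eqref{eq:tbound2}, that the ``far'' residues are negligible, so that the factor-$16$ inequality may be applied to the bulk of the sum of squares lower-bounded in~\eqref{eq:bracketbound}.

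Once this sharpened form is in hand, the endgame is identical to that of Proposition~\ref{prop:technical}, but driven by~\eqref{eq:pbound2}. Combining~\eqref{eq:B(d,p)-defn} with~\eqref{eq:pbound2} gives
$$
1+\lambda d^{3/8}p^{3/4}\ge B(d,p)>p^{1-1/n}\ge
1+\lambda\biggl(\frac{p^{5}\log p}{2K_r(p-1)^{1/r}}\biggr)^{3r/(16r-8)},
$$
and the same algebra as before, now with $2K_r$ playing the role of $\Theta K_r$, yields
$$
2K_rd^{1-1/r}p^{-2/r}(p-1)^{1/r}>d^{-1}p\log p.
$$
Feeding this into the sharpened bound above gives $\Gdp\le p-d^{-1}p\log p$, and since $d\mid n$ and $1-p^{-1/d}\le d^{-1}\log p$ I conclude
$$
\Gdp\le p-d^{-1}p\log p\le p^{1-1/d}\le p^{1-1/n},
$$
as required.
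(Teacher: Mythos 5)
Your proposal is correct and follows essentially the same route as the paper: assume $B(d,p)>p^{1-1/n}$, note that~\eqref{eq:bracketbound} with $\Theta=1$ is automatic, gain a factor of $2$ over the $\Theta=1$ case of Proposition~\ref{prop:technical} by invoking the sharper bound~\eqref{eq:realsumbd2} (with~\eqref{eq:tbound2} guaranteeing its applicability), and then run the identical endgame with $2K_r$ in place of $\Theta K_r$. The one step you flag as the main obstacle is resolved in the paper by a dichotomy on each window $\cI$ of $r$ consecutive integers: either $\sum_{x\in\cI}\llbracket g^{dx+y}\rrbracket_p^2\ge 2rF_r(t,p)/t$, in which case~\eqref{eq:realsumbd1} applied with the doubled constant already yields the required saving $16rF_r(t,p)/(tp^2)$, or else~\eqref{eq:tbound2} forces $\bigl|\llbracket g^{dx+y}\rrbracket_p\bigr|<p/4$ for all $x\in\cI$ and~\eqref{eq:realsumbd2} applies directly; your ``discard the far residues'' variant works too, but it must likewise be executed window-by-window (where the per-window lower bound $rF_r(t,p)/t$ lives), since the global inequality $p^2/16\ge F_r(t,p)$ that a one-shot version of your argument would need is not a consequence of~\eqref{eq:tbound2}.
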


\begin{proof}
Fix $y$ in the range $1\le y\le d$.
For every set $\cI$ containing precisely $r$ consecutive integers,
the proof of~\cite[Theorem~4.2]{KoSh} shows that
$$
\sum_{x\in\cI}\llbracket g^{dx+y}\rrbracket_p^2\ge \frac{rF_r(t,p)}{t}.
$$
where $F_r(t,p)$ is as in Proposition~\ref{prop:technical}. 
If it is the case that
$$
\sum_{x\in\cI}\llbracket g^{dx+y}\rrbracket_p^2\ge \frac{2rF_r(t,p)}{t},
$$
then Lemma~\ref{lem:expsumest} gives
\begin{equation}
\label{eq:intIest}
\Re\sum_{y\in\cI}\e(b_y/p)\le r-\frac{16rF_r(t,p)}{tp^2}.
\end{equation}
On the other hand, suppose that
$$
\sum_{x\in\cI}\llbracket g^{dx+y}\rrbracket_p^2<\frac{2rF_r(t,p)}{t}.
$$
From~\eqref{eq:tbound2} it follows that
$$
\sum_{x\in\cI}\llbracket g^{dx+y}\rrbracket_p^2<\frac{p^2}{16},
$$
hence $\bigl|\llbracket g^{dx+y}\rrbracket_p\bigr|<p/4$ for each $x\in\cI$.
By Lemma~\ref{lem:expsumest} we again obtain~\eqref{eq:intIest}.

Since~\eqref{eq:intIest} holds for every set $\cI$ of $r$ consecutive integers,
it follows that
$$
\Re\sum_{y=1}^{t-1}\e(b_y/p)\le t-\frac{16F_r(t,p)}{p^2}.
$$
Writing
$$
\Gdp=1+d\max_{1\le y\le d}\biggl|\sum_{x=1}^t\e(g^{dx+y}/p)\biggr|
$$
and proceeding as in the proof of~\cite[Theorem~4.2]{KoSh} we see that
\begin{equation}
\label{eq:bonkers2}
\Gdp\le 1+d\bigl(t-8 F_r(t,p)(1-1/t)p^{-2}\bigr).
\end{equation}
We complete the proof of Proposition~\ref{prop:technical2}
by following that of Proposition~\ref{prop:technical},
taking $\Theta:=1$ and applying~\eqref{eq:bonkers2} instead of~\eqref{eq:bonkers}.
\end{proof}

\begin{corollary}
\label{cor:bear}
Suppose that $n\ge 2000$, $p\ge 8.5\times 10^6$, and $t\ge 173$.
Then $\Gdp\le p^{1-1/n}$.
\end{corollary}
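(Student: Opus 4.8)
The plan is to obtain Corollary~\ref{cor:bear} as a single application of Proposition~\ref{prop:technical2} with the choice $r:=48$, verifying its three hypotheses under the standing assumptions $n\ge 2000$, $p\ge 8.5\times 10^{6}$ and $t\ge 173$. First I would dispatch the arithmetic hypothesis $\varphi(t)\ge r$. With $r=48$ the assertion is that $\varphi(t)\ge 48$ for every $t\ge 173$, and this reduces to a finite check on the totient function: one verifies that every $t$ with $\varphi(t)\le 46$ satisfies $t\le 150$ (the largest such $t$ being $t=150$, with $\varphi(150)=40$), so that in fact $\varphi(t)\ge 48$ already holds for all $t\ge 151$, and a fortiori for $t\ge 173$. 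Since $\varphi(t)$ is even for $t>2$, only the even values $\varphi(t)\in\{2,4,\dots,46\}$ need be ruled out.

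The heart of the matter is the verification of the two inequalities \eqref{eq:tbound2} and \eqref{eq:pbound2} for $r=48$. I expect \eqref{eq:tbound2}, namely $p^{2/r}t^{1-1/r}>32rC_r$, to be the main obstacle, since it forces an \emph{upper} bound on $C_{48}=(48\,\gamma_{47}^{47})^{-1/48}$, equivalently a \emph{lower} bound on the Hermite constant $\gamma_{47}$ (and hence on the center density in dimension $47$). This is exactly where the effective lower bounds of Cohn and Elkies~\cite{CohnElkies} enter: a numerical bound $\gamma_{47}\ge\gamma_{0}$ yields $C_{48}\le(48\,\gamma_{0}^{47})^{-1/48}$, after which one checks that $32\cdot 48\cdot C_{48}$ is dominated by $p^{2/48}t^{47/48}$ at the extremal corner $p=8.5\times 10^{6}$, $t=173$. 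Because the left-hand side of \eqref{eq:tbound2} is increasing in both $p$ and $t$, this one comparison settles all admissible pairs $(p,t)$. The strength of the center-density bound is precisely what fixes the threshold $t\ge 173$ --- a weaker bound on $\gamma_{47}$ would enlarge it --- which is the sense in which the corollary relies essentially on \cite{CohnElkies}.

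For \eqref{eq:pbound2} I would bound $K_{48}=4(1-1/f_{48})C_{48}$ from below, using $f_{48}=53$ together with the same estimate for $C_{48}$, and then confirm that
$$
p^{1-1/n}\ge 1+\lambda\biggl(\frac{p^{5}\log p}{2K_{48}(p-1)^{1/48}}\biggr)^{144/760}
$$
holds throughout the region $n\ge 2000$, $p\ge 8.5\times 10^{6}$. Here the exponent $3r/(16r-8)=144/760$ makes the right-hand side grow only like $p^{0.95}$, whereas the left-hand side grows like $p^{1-1/n}\ge p^{1-1/2000}$; the inequality is therefore governed by the corner $n=2000$, $p=8.5\times 10^{6}$, where it should hold with only a narrow margin, and monotonicity in $p$ and $n$ covers everything else. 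With $\varphi(t)\ge r$, \eqref{eq:tbound2} and \eqref{eq:pbound2} all in force, Proposition~\ref{prop:technical2} yields $\Gdp\le p^{1-1/n}$ immediately, completing the proof.
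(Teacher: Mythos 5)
Your overall strategy---apply Proposition~\ref{prop:technical2} with a single fixed $r$, reduce $\varphi(t)\ge r$ to a finite totient check, and verify \eqref{eq:tbound2} and \eqref{eq:pbound2} at the extremal corner $p=8.5\times 10^6$, $n=2000$, $t=173$ using monotonicity---is exactly the paper's (which takes $r:=30$ rather than $48$). But there is a genuine error in how you handle the Hermite-constant input, and it is not a cosmetic one. The two hypotheses pull in \emph{opposite} directions: \eqref{eq:tbound2} has $C_r$ on the large side, so it needs an \emph{upper} bound on $C_r$, i.e.\ a \emph{lower} bound on $\gamma_{r-1}$; but \eqref{eq:pbound2} has $K_r=4(1-1/f_r)C_r$ in a denominator, so verifying it needs a \emph{lower} bound on $K_r$, hence a lower bound on $C_r$, hence an \emph{upper} bound on $\gamma_{r-1}$. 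You propose to bound $K_{48}$ from below ``using the same estimate for $C_{48}$'' that you derived for \eqref{eq:tbound2}---but that estimate is an upper bound on $C_{48}$, which cannot bound $K_{48}$ from below. You never procure the upper bound on the Hermite constant that \eqref{eq:pbound2} requires, so that hypothesis is unverified. Relatedly, your attribution is reversed: Cohn and Elkies~\cite{CohnElkies} prove \emph{upper} bounds on packing density (and their tables stop around dimension~36, so they say nothing about $\gamma_{47}$); in the paper they supply the upper bound $\gamma_{29}<3.90553$ feeding into \eqref{eq:pbound2}, which is what drives the threshold $p\ge 8.5\times 10^6$ down and keeps the companion computation feasible, while the lower bound $\gamma_{29}>2.08174$ needed for \eqref{eq:tbound2} comes from the Minkowski--Hlawka inequality.

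The specific choice $r=48$ also appears not to survive the numerics. With the Minkowski--Hlawka bound one gets only $\gamma_{47}\gtrsim 3.1$, hence $C_{48}\lesssim 0.30$ and $32\cdot 48\cdot C_{48}\approx 460$, whereas at the corner $p^{2/48}t^{47/48}\approx (8.5\times 10^6)^{1/24}\cdot 173^{47/48}\approx 1.94\times 155\approx 302$; so \eqref{eq:tbound2} fails unless you import a much stronger lower bound $\gamma_{47}>4.85$ from an explicit dense lattice in dimension~47. Even then, \eqref{eq:pbound2} with $r=48$ is at best borderline (the factor $(p-1)^{1/48}\approx 1.39$ is substantially smaller than $(p-1)^{1/30}\approx 1.70$, which hurts you). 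The paper's choice $r=30$ makes both inequalities hold, and only just: both are tight at the stated thresholds. So beyond fixing the direction of the $C_r$ bounds, you would need to re-optimize $r$ (or accept worse thresholds on $p$ and $t$, which would then break the matching Computation).
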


\begin{proof}
We put $r:=30$ in the statement of Proposition~\ref{prop:technical2}.
For any $t\ge 173$, we have $\varphi(t)\ge r$, and the
inequalities~\eqref{eq:tbound2} and~\eqref{eq:pbound2} are readily
verified by taking into account that $2.08174<\gamma_{29}<3.90553$
(the lower bound on $\gamma_{29}$ follows from
$$
\gamma_k\ge\frac{1}{\pi}\,\bigl(2\zeta(k)\Gamma(1+k/2)\bigr)^{2/k},
$$
which was first stated by Minkowski and proved by Hlawka~\cite{Hlawka};
the upper bound on $\gamma_{29}$ follows from
Cohn and Elkies~\cite[Table~3]{CohnElkies}).
\end{proof}

\subsection{Numerical methods}
\label{sec:numerical}

\begin{computation}
\label{comp:fox}
For all $n\ge 2000$, $p\le 8.5\times 10^6$, and $t\ge 173$, the
inequality $\Gdp\le p^{1-1/n}$ holds.
\end{computation}

\begin{proof}[Description.]
For all $t\ge 637$ one sees that $B(d,p)\le p^{1-1/2000}$ for all primes
$p\le 8.5\times 10^6$; hence $\Gdp\le p^{1-1/n}$ holds in this case.

For $375000\le p\le 8.5\times 10^6$ and $173\le t\le 636$
we apply Corollary~\ref{cor:scissors}. Since the inequality
$\varphi(t)\ge 25$ is easily satisfied,
it suffices to verify that~\eqref{eq:bracketboundx} holds for all
such $p$ and $t$, which we have done.

Similarly, for $6500\le p\le 375000$ and $173\le t\le 636$
we apply Corollary~\ref{cor:string}, checking that
\eqref{eq:bracketboundy} holds for all such $p$ and $t$.

For the remaining primes $p\le 6500$ we have verified 
on a case-by-case basis that $\Gdp\le p^{1-1/2000}$ holds
whenever $t\ge 173$.
\end{proof}

Taking into account Corollary~\ref{cor:bear} and the above computation
along with the trivial bounds $G_1(p)=0$, $G_2(p)=p^{1/2}$
and $G_d(p)\le p$ when $d\ge 3$,
for every $n\ge 2000$ we deduce from~\eqref{eq:A(n)bd} that
\begin{equation}
\label{eq:coffee}
A(n)\le n^{3/n}\prod_{\substack{d\,\mid\, n\\ d\ge 3}}~
\prod_{\substack{p\equiv 1\,(\textrm{mod}~d)\\ (p-1)/d\le 172}}p^{1/n}
=n^{3/n}\mathop{\prod_{\substack{d\,\mid\, n\\ d\ge 3}}
~\prod_{t\le 172}}\limits_{dt+1\text{~is prime}}(dt+1)^{1/n}.
\end{equation}
This yields a useful but somewhat less precise bound
\begin{equation}
\label{eq:coffee2}
A(n)\le n^{3/n}(172n+1)^{172\tau(n)/n}.
\end{equation}
Combining~\eqref{eq:coffee2} with the explicit
bound of Nicolas and Robin~\cite{NicRob}
$$
\frac{\log\tau(n)}{\log 2}\le 1.54\,\frac{\log n}{\log\log n}
\qquad (n\ge 3),
$$
one sees that $A(n)<4.7$ for all $n\ge 456000$.

For smaller values of $n$, we have used the bound~\eqref{eq:coffee}
to check that the inequality $A(n)<A(6)$ holds for all $n$ in the range
$2000<n<456000$ apart from $677$ ``exceptional'' numbers,
which we collect together into a set
$$
\cE:=\{2002,2004,2010,\ldots,25200,27720,30240\}.
$$
We take $\cD$ to be the set of integers $d\ge 3$ such that either
$d\le 2000$ or else $d$ divides some number $n \in \cE$; the set
$\cD$ has $2710$ elements.

For each $d\in\cD$ and prime $p$ satisfying the conditions
$p\equiv 1\pmod d$, $(p-1)/d\le 172$, and $B(d,p)>p^{1-1/d}$,
we have computed the value of $\Gdp$ numerically to high precision;
this has been done for precisely $85112$ pairs $(p,d)$
altogether, and of these, all but $3618$ pairs have been
subsequently eliminated as the condition $\Gdp\le p^{1-1/d}$ is met;
for the surviving pairs, the value $\Gdp$ has been retained.  Having
these values at our disposal, we have been able to accurately estimate
the quantity $A_2(n)$ for all $n\le 2000$ and for all $n\in\cE$.
In view of~\eqref{eq:A(n)bd} we have found that $A(n)<4.7$
for all $n>6$.

It is well known that $A(2)=\sqrt{2}$, and 
using~\eqref{eq:A(n)calc} we are able to determine
$A(n)$ precisely for $n=3,4,5,6$ (see Table~\ref{tab:AN} 
in~\S\ref{sec:furthercalcs}).
We find that $A(n)<4.7$ for $2\le n\le 5$, whereas $A(6)>4.7$,
and the proof of Theorem~\ref{thm:main} is complete.

\section{Further results and conjectures}
\label{sec:furthercalcs}

In Table~\ref{tab:AN}, we list numerical upper bounds for $A(n)$ in the
range $3\le n\le 40$; each bound agrees with the exact value of $A(n)$ to within $10^{-8}$.

\begin{table}[H]
  \centering
\begin{tabular}{|c|c||c||c|c|}
\hline
$n$ & $A(n)$ & & $n$ & $A(n)$ \\
\hline
3 & 3.92853006 & & 22 & 1.46567511 \\
4 & 4.26259099 & & 23 & 1.31902122 \\
5 & 2.59880326 & & 24 & 1.77609946 \\
6 & 4.70923686 & & 25 & 1.42781090 \\
7 & 2.11936480 & & 26 & 1.60401011 \\
8 & 2.21026135 & & 27 & 1.54156739 \\
9 & 2.28069995 & & 28 & 1.35754104 \\
10 & 3.25099720 & & 29 & 1.14455967 \\
11 & 1.53359821 & & 30 & 1.69652491 \\
12 & 2.65269611 & & 31 & 1.00000000 \\
13 & 1.39611207 & & 32 & 1.51129998 \\
14 & 1.56950385 & & 33 & 1.31715766 \\
15 & 1.44795316 & & 34 & 1.18744155 \\
16 & 1.78417788 & & 35 & 1.23094084 \\
17 & 1.15247718 & & 36 & 1.78968236 \\
18 & 2.53272793 & & 37 & 1.19086823 \\
19 & 1.00000000 & & 38 & 1.08865451 \\
20 & 1.94022813 & & 39 & 1.31104883 \\
21 & 1.60324184 & & 40 & 1.47364476 \\ 
\hline
\end{tabular}
\caption{Values $A(n)$ with $3\le n\le 40$}
\label{tab:AN}
\end{table}

We observe that $A(19)=A(31)=1$.  On the basis of this and other numerical data
gathered for this project, we make the following

\begin{conj} 
\label{conj: A=1} We have $A(n)=1$ for infinitely
many natural numbers $n$.
\end{conj}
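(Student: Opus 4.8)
The plan is to establish the conjecture along a suitable infinite family of \emph{prime} values $n=\ell$, for which the factorization~\eqref{eq:A(n)calc} simplifies dramatically. Writing $A(\ell)=A_1(\ell)A_2(\ell)$, the only divisors of $\ell$ are $1$ and $\ell$; since $G_1(p)=0$, the factor $A_2(\ell)$ receives contributions only from primes $p$ with $\gcd(\ell,p-1)=\ell$, that is, from primes of the shape $p=t\ell+1$. I would therefore choose $\ell$ so that both $A_2(\ell)=1$ and $A_1(\ell)=1$, and then invoke Dirichlet's theorem to produce infinitely many such primes.

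For the factor $A_2(\ell)$, the effective results already proved in this paper do most of the work: by Corollary~\ref{cor:bear} together with Computation~\ref{comp:fox}, once $\ell\ge 2000$ one has $G_\ell(p)\le p^{1-1/\ell}$ for every prime $p=t\ell+1$ with $t\ge 173$ (here $d=\gcd(\ell,p-1)=\ell$). Thus only the finitely many ``small'' values $t\le 172$ can spoil $A_2(\ell)=1$, and a short analysis of the case $t=2$ shows these cannot be controlled by inequalities alone: the coset $\{1,-1\}$ forces $G_\ell(2\ell+1)\ge 1+2\ell\cos(2\pi/(2\ell+1))\approx 2\ell+1-\pi^2/\ell$, which exceeds the threshold $(2\ell+1)^{1-1/\ell}\approx 2\ell+1-2\log(2\ell)$; so $2\ell+1$ \emph{must} be composite, and one must prevent such primes from occurring rather than bound them. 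The plan here is a covering construction. For odd $t$ the number $t\ell+1$ is automatically even, hence composite; for each of the $86$ even values $t\in\{2,4,\dots,172\}$ I would fix a \emph{distinct} prime $q_t>172$ and impose $\ell\equiv -t^{-1}\pmod{q_t}$, which forces $q_t\mid t\ell+1$ and makes $t\ell+1$ composite for all large $\ell$. Since the $q_t$ are distinct primes, the Chinese Remainder Theorem combines these into a single residue class modulo $M:=\prod_t q_t$ that is coprime to $M$; every sufficiently large prime $\ell$ in this class then admits no prime $p=t\ell+1$ with $t\le 172$, whence $A_2(\ell)=1$.

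It remains to secure $A_1(\ell)=1$, i.e.\ $G_\ell(\ell^m)\le \ell^{m(1-1/\ell)}$ for $m=1,2,3$. Two of these are immediate: expanding $x^\ell$ $\ell$-adically gives $G_\ell(\ell)=0$ (by Fermat, $x^\ell\equiv x$) and $G_\ell(\ell^3)=\ell^2$, both comfortably below the threshold. For $m=2$ the same expansion yields $G_\ell(\ell^2)=\ell\,\max_{a}\bigl|1+\sum_{h}\e(ah/\ell^2)\bigr|$, where $h$ runs over the $\ell-1$ Teichm\"uller representatives, i.e.\ over the index-$\ell$ subgroup $\{c^\ell\}$ of $(\Z/\ell^2\Z)^*$ (and over its cosets as $a$ varies). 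Only $O(\log\ell)$ worth of cancellation is needed to beat $\ell^{1-2/\ell}$, so I would apply the second-moment argument behind Lemma~\ref{lem:expsumest} (equivalently, the Hermite-constant method of~\cite[Theorem~4.2]{KoSh}) to this coset, exactly as in the unramified case, obtaining $A_1(\ell)=1$ for all large $\ell$.

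The hard part will be the last step made \emph{uniform}: one must guarantee that for every large prime $\ell$ in the covering class the local sum $G_\ell(\ell^2)$ stays below $\ell^{2(1-1/\ell)}$, which amounts to ruling out any abnormal concentration near $0\bmod\ell^2$ of a coset of the Teichm\"uller subgroup (equivalently, a bound on an exponential sum with Fermat quotients). This is the one place where a genuinely new uniform estimate is required rather than a direct appeal to the paper's existing machinery, and coordinating it with the covering congruences and the range $\ell\ge 2000$ of the effective bounds is where I expect the main difficulty to lie.
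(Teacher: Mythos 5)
First, a point of order: the statement you were asked to prove is presented in the paper as Conjecture~\ref{conj: A=1}, and the paper offers \emph{no proof} of it --- its only support is the numerical observation that $A(19)=A(31)=1$. So there is no argument of the authors to compare yours against; the question is only whether your proposal actually closes the problem. It does not, and you essentially say so yourself in your final paragraph, but it is worth pinpointing exactly where and why. The reductive part of your plan is sound and attractive: for prime $\ell$ the factorization~\eqref{eq:A(n)calc} leaves only the local factor at $p=\ell$ and the primes $p=t\ell+1$; Corollary~\ref{cor:bear} and Computation~\ref{comp:fox} dispose of $t\ge 173$ once $\ell\ge 2000$; and your covering congruences $\ell\equiv -t^{-1}\pmod{q_t}$ for the $86$ even $t\le 172$, combined with Dirichlet, do produce infinitely many primes $\ell$ with no prime of the form $t\ell+1$, $t\le 172$, hence $A_2(\ell)=1$. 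Your local computations for $m=1$ and $m=3$ ($G_\ell(\ell)=0$ and $G_\ell(\ell^3)=\ell^2$) are also correct.

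The fatal gap is the case $m=2$, and it is worse than ``a uniformity issue to be coordinated later'': the method you propose for it provably cannot work. You need $\bigl|1+\sum_{u=1}^{\ell-1}\e(a u^\ell/\ell^2)\bigr|\le \ell^{1-2/\ell}$, i.e.\ a saving of roughly $2\log\ell$ over the trivial bound $\ell$. The sum here is (up to the term $1$) precisely Heilbronn's exponential sum: the Teichm\"uller subgroup has order $\ell-1\approx q^{1/2}$ inside $(\Z/q\Z)^*$ with $q=\ell^2$, which is exactly the regime where the second-moment/Hermite-constant machinery of Lemma~\ref{lem:expsumest} and \cite[Theorem~4.2]{KoSh} collapses. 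Quantitatively, the lattice argument yields $\sum_u\llbracket u^\ell\rrbracket_{\ell^2}^2\ge C_r(\ell^2)^{2-2/r}(\ell-1)^{1/r}$, which after dividing by $q^2=\ell^4$ gives a saving of order $\ell^{-3/r}=o(1)$ --- not even a bounded saving, let alone $2\log\ell$ --- and optimizing $r$ in terms of $\ell$ still leaves you short by a factor of about $\log^2\ell$. To close this step you would need an explicit form of the Heath-Brown--Konyagin bound $\sum_{x=1}^{p-1}\e(ax^p/p^2)\ll p^{11/12}$ from~\cite{HBK} (or any explicit power saving for Heilbronn's sum), which is a genuinely deeper input than anything in this paper's toolkit. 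As it stands, your proposal reduces the conjecture to that single estimate but does not prove it, so the statement remains a conjecture.
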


On the other hand, the average value of $A(n)$ is not too close to one
in the following sense.

\begin{proposition}
\label{prop:misc}
Put
$$
E(N):=\sum_{n=2}^N \bigl(A(n)-1\bigr)\qquad(N\ge 2).
$$
Then $E(N)\ge (2+o(1))\log N$ as $N\to\infty$.
\end{proposition}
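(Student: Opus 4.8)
The plan is to obtain the bound from a single, well-chosen family of exponents. For each odd prime $p$ set $n:=(p-1)/2$; then $d:=\gcd(n,p-1)=\gcd(n,2n)=n$ and $t:=(p-1)/d=2$, so the relevant Gauss sum degenerates into an explicit cosine sum of size close to $p$. The point is that this single prime already makes $A(n)$ exceed $1$ by about $2(\log p)/p$, and summing over $p$ by Mertens' theorem produces the constant $2$.

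First I would record the reduction $A(n)\ge\Gnp/p^{\pow}$. For $n=(p-1)/2$ the pair $(d,p)=(n,p)$ satisfies $p\nmid n$, $\gcd(n,p-1)=n$, and $B(n,p)>p^{\pow}$; the last holds because each of the three terms in the minimum defining $B(n,p)$ exceeds $p$ once $p$ is large (recall $n=(p-1)/2$, so e.g.\ the third term is $\lambda n^{3/8}p^{3/4}\gg p^{9/8}$), whence $B(n,p)>p>p^{\pow}$. Thus the factor $\max\{\Gnp/p^{\pow},1\}$ genuinely occurs in the product defining $A_2(n)$, and since every other factor of $A_2(n)$, as well as $A_1(n)$, is at least $1$, the identity~\eqref{eq:A(n)calc} gives $A(n)\ge\Gnp/p^{\pow}$.

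Next I would evaluate the sum. For $x\not\equiv0\pmod p$ one has $x^{(p-1)/2}\equiv\pm1\pmod p$, with each sign occurring $(p-1)/2$ times, so
$$
S_n(1,p)=1+\frac{p-1}{2}\bigl(\e(1/p)+\e(-1/p)\bigr)=1+(p-1)\cos(2\pi/p).
$$
Since this is positive for $p\ge5$, taking $a=1$ gives, via $\cos\theta\ge1-\theta^2/2$,
$$
\Gnp\ge1+(p-1)\cos(2\pi/p)\ge p-\frac{2\pi^2}{p}.
$$
On the other hand $p^{\pow}=p\exp\bigl(-2(\log p)/(p-1)\bigr)$, so
$$
\frac{\Gnp}{p^{\pow}}\ge\Bigl(1-\frac{2\pi^2}{p^2}\Bigr)\exp\Bigl(\frac{2\log p}{p-1}\Bigr)=1+\frac{2\log p}{p}+o\Bigl(\frac{\log p}{p}\Bigr)\qquad(p\to\infty),
$$
and therefore $A\bigl((p-1)/2\bigr)-1\ge(2+o(1))(\log p)/p$. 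The factor $2$ here is exactly the exponent $1/n=2/(p-1)$ controlling $p^{\pow}$.

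Finally I would sum over primes. The map $p\mapsto(p-1)/2$ is injective and carries the primes in $[5,2N+1]$ into $\{2,\dots,N\}$, and every term $A(n)-1$ is nonnegative by~\eqref{eq:A1}. Hence, given $\varepsilon>0$ and a threshold $p_0$ beyond which $A((p-1)/2)-1\ge(2-\varepsilon)(\log p)/p$,
$$
E(N)\ge\sum_{p_0\le p\le 2N+1}\Bigl(A\bigl(\tfrac{p-1}{2}\bigr)-1\Bigr)\ge(2-\varepsilon)\sum_{p_0\le p\le 2N+1}\frac{\log p}{p}=(2-\varepsilon)\bigl(\log N+O(1)\bigr),
$$
by Mertens' theorem $\sum_{p\le x}(\log p)/p=\log x+O(1)$. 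Letting $N\to\infty$ and then $\varepsilon\to0$ yields $E(N)\ge(2+o(1))\log N$. I expect the only subtle point to be the bookkeeping in the reduction step—checking that $(n,p)$ is really counted in $A_2(n)$ and that the remaining factors cannot pull the product below $\Gnp/p^{\pow}$; the evaluation of $S_n(1,p)$ and the Mertens summation are routine. (I note that richer families, taking $t\in\{3,4,6\}$ in place of $t=2$ and using a short-vector argument to keep $\Gnp$ within $O(1)$ of $p$, would improve the constant, but the clean $t=2$ family already delivers the stated bound.)
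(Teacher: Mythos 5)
Your proposal is correct and follows essentially the same route as the paper: the same family $n=(p-1)/2$, the same evaluation $S_n(1,p)=1+(p-1)\cos(2\pi/p)$, and the same summation over primes $p\le 2N+1$ (the paper leaves the Mertens step implicit). The only cosmetic difference is your detour through the $A_2(n)$ product to justify $A(n)\ge \Gnp/p^{\pow}$, which is unnecessary since this follows at once from the definition of $A(n)$ as a supremum over all moduli $q$.
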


\begin{proof}
Let $p\ge 5$ be an odd prime, and set $n:=(p-1)/2$.
It is easy to see that $S_n(a,p)=1+(p-1)\cos(2\pi a/p)$ if $p\nmid a$,
hence 
$$
G_n(p)\ge 1+(p-1)\cos(2\pi/p)=p-2\pi^2 p^{-1}+O(p^{-2}).
$$
Using this bound together with the estimate
$$
p^{1/n}=\exp\(\frac{2\log p}{p-1}\)=1+\frac{2\log p}{p}+O\bigg(\frac{\log^2p}{p^2}\bigg)
$$
it follows that
$$
A(n)\ge\frac{G_n(p)}{p^{1-1/n}}\ge 1+\frac{2\log p}{p}+O\bigg(\frac{\log^2p}{p^2}\bigg);
$$
therefore
\begin{align*}
E(N)&\ge\sum_{\substack{2\le n\le N\\2n+1\text{~is prime}}}\bigl(A(n)-1\bigr)
=\sum_{5\le p\le 2N+1}\bigl(A((p-1)/2)-1\bigr)\\
&\ge\sum_{5\le p\le 2N+1} \bigg(\frac{2\log p}{p}+O\bigg(\frac{\log^2p}{p^2}\bigg)\bigg)
=(2+o(1))\log N,
\end{align*}
and the proposition is proved.
\end{proof}

Combining Proposition~\ref{prop:misc} with the upper bound
$E(N)\ll(\log N)^3$, which follows immediately from~\eqref{eq:KoSh},
we see that 
$$
\log E(N)\asymp\log\log N,
$$ 
and it seems reasonable
to make the following

\begin{conj}
\label{conj: A > 1} For some constant $c\in(1,3)$ we have
$$
E(N)=(\log N)^{c+o(1)}\qquad(N\to\infty).
$$
\end{conj}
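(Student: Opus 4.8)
The plan is to prove the stronger statement $E(N)=(\log N)^{2+o(1)}$, exhibiting $c=2$. Since $1\le A(n)\le A$ by Theorem~\ref{thm:main}, we have $A(n)-1\asymp\log A(n)$, and~\eqref{eq:A(n)bd} gives $A(n)-1\ll\tfrac{\log n}{n}+\log A_2(n)$ while trivially $A(n)-1\ge\log A_2(n)$; as $\sum_{n\le N}\tfrac{\log n}{n}\asymp(\log N)^2$, both the upper and the lower bound for $E(N)$ reduce to estimating $\sum_{n\le N}\log A_2(n)$. Writing each surviving factor of $A_2(n)$ as $\Gdp/p^{1-1/n}=\rho(d,p)\,p^{1/n}$ with $\rho(d,p):=\Gdp/p\le1$ and interchanging the order of summation, $\sum_{n\le N}\log A_2(n)$ becomes a double sum over primes $p$ and admissible exponents $n$ of the positive parts $\bigl(\tfrac{\log p}{n}+\log\rho(d,p)\bigr)_{+}$, which I would organize by the level $t:=(p-1)/\gcd(n,p-1)$. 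Here $\rho(d,p)$ depends only on the order-$t$ subgroup $H\subset\F_p^{*}$ of $d$-th powers, through $\Gdp=1+d\max_a\bigl|\sum_{y\in H}\e(ay/p)\bigr|$.

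First I would extract the main term, and the lower bound $c\ge2$, from the two levels $t=1$ and $t=2$. Here $H=\{1\}$ and $H=\{1,-1\}$ are forced to cluster at $0$, and an exact computation yields $\rho(p-1,p)=1-O(p^{-3})$ and $\rho((p-1)/2,p)=1-O(p^{-2})$ \emph{uniformly} in $p$. Extending Proposition~\ref{prop:misc} from the single exponent $n=(p-1)/2$ to all $n=m(p-1)/2\le N$ with $m$ odd (and, for $t=1$, to all multiples $n=k(p-1)\le N$), the inner sum over the multiplier contributes $\gg\frac{\log p}{p}\log(N/p)$ for each prime $p\le2N+1$; summing by way of $\sum_{p\le X}\frac{\log p}{p}\sim\log X$ gives $\sum_{n\le N}\log A_2(n)\gg(\log N)^2$, hence $E(N)\gg(\log N)^2$.

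The matching upper bound would follow from showing that the levels $t\ge3$ contribute, in aggregate, only $o\bigl((\log N)^2\bigr)$, and here the governing quantity is lattice-geometric. One has $1-\rho(d,p)\asymp_t p^{-2}\min_a\sum_{z\in aH}\llbracket z\rrbracket_p^2=p^{-2}\lambda_1^2$, where $\lambda_1$ is the Euclidean minimum of the rank-$t$ lattice $\Lambda_{t,p}:=\{a(1,\zeta,\ldots,\zeta^{t-1})\bmod p\}+p\Z^t$ (with $\zeta$ generating $H$, of determinant $p^{t-1}$); this is exactly the coset sum in~\eqref{eq:bracketbound}. For $t\ge3$ the group $H$ carries no short vector for free, and the universal lower bound $\lambda_1^2\ge F_{\varphi(t)}(t,p)\gg_t p^{2-2/\varphi(t)}$—the unconditional case $\Theta=1$ of~\eqref{eq:bracketbound}, which rests on the Hermite constants $\gamma_k$ that already drive Propositions~\ref{prop:technical} and~\ref{prop:technical2}—shows that $\rho(d,p)$ can come no closer to $1$ than $1-c_t\,p^{-2/\varphi(t)}$ for some $c_t>0$, so that no single prime over-contributes. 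For the distinguished CM levels $t\in\{3,4,6\}$, where $\F_p^{*}$ is tied to $\Q(i)$ and $\Q(\sqrt{-3})$, one in fact has $\lambda_1\asymp\sqrt p$ for \emph{every} admissible prime (since $|N(\alpha)|\ge p$ for nonzero ideal elements), so these levels behave identically across primes and each contributes $\asymp\log N\log\log N$, well below $(\log N)^2$.

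The main obstacle is that the universal lower bound above, used alone, is far too lossy: crediting \emph{every} prime at every level with the extremal clustering it permits already overshoots the proven bound $E(N)\ll(\log N)^3$ implied by~\eqref{eq:KoSh}. What is truly needed is \emph{distributional}. For the levels $\varphi(t)\ge4$, where $\Q(\zeta_t)$ has positive unit rank and $\lambda_1(\Lambda_{t,p})$ genuinely fluctuates between its floor $p^{1-1/\varphi(t)}$ and the Minkowski ceiling $p^{1-1/t}$, one must count the \emph{well-clustered} primes—those $p\le X$ with $p\equiv1\pmod t$ and $\lambda_1(\Lambda_{t,p})\le\delta\,p^{1-1/\varphi(t)}$—and show that they are rare enough, uniformly in $t$ and $\delta$, for the resulting weighted counts to sum over all $t$ to $o\bigl((\log N)^2\bigr)$. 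Equivalently, this is a statement about the distribution of the least length of an element of the prime ideal $\mathfrak p\mid p$ of $\Z[\zeta_t]$, i.e.\ about the statistics of shortest vectors in ideal lattices. I expect this input, rather than any further manipulation of the Gauss sums themselves, to be the decisive and most delicate step; a sufficiently strong and uniform version would give $\sum_{t\ge3}E_t=o\bigl((\log N)^2\bigr)$, pin the exponent at $c=2$, and thereby establish the conjecture in its strongest form.
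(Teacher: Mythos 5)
First, a point of orientation: this statement is a \emph{conjecture} in the paper, not a theorem. The authors do not prove it; their evidence consists of Proposition~\ref{prop:misc} (the lower bound $E(N)\ge(2+o(1))\log N$), the upper bound $E(N)\ll(\log N)^3$ that follows from~\eqref{eq:KoSh}, whence $\log E(N)\asymp\log\log N$, and the numerical plots of $E_2(N)/(\log N)^c$ in Figures~\ref{fig:1.7}--\ref{fig:1.8}, which suggest a limiting exponent in $(1.74,1.78)$. So there is no paper proof to compare yours against, and your proposal must be judged as a proof attempt on its own terms. On those terms it is not a proof: your upper bound half is openly conditional on an unproven distributional hypothesis — a count, uniform in $t$ and $\delta$, of primes $p\equiv 1\pmod t$ for which the shortest vector of the rank-$t$ ideal-type lattice $\Lambda_{t,p}$ falls within a factor $\delta$ of its floor $p^{1-1/\varphi(t)}$. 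Nothing in the paper's toolkit (the Hermite-constant bound underlying~\eqref{eq:bracketbound}, or the Cochrane--Pinner bound) gives statistics of $\lambda_1(\Lambda_{t,p})$ over $p$; it gives only the uniform floor, which, as you yourself observe, is too lossy to sum over levels. Until that statistical input exists, the aggregate of the levels with $\varphi(t)\ge 4$ is not controlled, and the claimed conclusion $E(N)=(\log N)^{2+o(1)}$ is a program, not a theorem. You should state this as a conditional result or a conjecture refinement, not a proof.

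That said, the lower-bound half of your argument deserves separate comment, because it appears to be essentially correct and genuinely strengthens Proposition~\ref{prop:misc} from $\gg\log N$ to $\gg(\log N)^2$. Since $\log A_2(n)$ is by definition a sum over pairs $(d,p)$, and $A(n)\ge A_2(n)$, your interchange of summation is legitimate with no multiplicativity subtleties; alternatively one can use $G_n(q)=\prod_p G_n(p)$ for squarefree $q$. Two small repairs are needed. First, your claim that the inner sum over multipliers contributes $\gg\frac{\log p}{p}\log(N/p)$ ``for each prime $p\le 2N+1$'' is false for small $p$: since $1-G_{(p-1)/2}(p)/p\asymp p^{-2}$, the term for $n=m(p-1)/2$ is positive only for $m\ll p\log p$, so the inner logarithm is really $\log\min(N/p,\,p\log p)$. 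Restricting to $p\in[N^{1/2},N^{0.99}]$, say, still yields $\sum_p\frac{\log p}{p}\log(N/p)\gg(\log N)^2$, so the conclusion survives. Second, you must discard the $m$ divisible by $p$ (so that $p\nmid n$, as required in $A_2$), a density-$1/p$ loss. Finally, be aware of the tension your lower bound creates with the paper's own data: $c\ge 2$ is incompatible with the numerically suggested range $(1.74,1.78)$, though not with the conjecture as stated. This is not a contradiction — at $N=2000$ one has $\log\log N\approx 2$, and the slowly divergent sums $\sum_{p}\frac{(\log p)^2}{p}$ driving your main term make severe pre-asymptotic behavior expected — but it means that if your lower bound is right, the plotted exponent cannot be the true limit, and this consequence is noteworthy enough that you should present the $E(N)\gg(\log N)^2$ bound carefully and on its own, independent of the speculative upper-bound program.
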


Although we have only computed $E(N)$ precisely in the limited
range $2\le N\le 40$, for large $N$ the value $E(N)$ is
closely approximated by the\break quantity 
$$
E_2(N):=\sum_{n=2}^N\bigl(A_2(n)-1\bigr),
$$
which therefore provides a reasonably tight lower bound for $E(N)$.
Using the data we collected for the proof of Theorem~\ref{thm:main}
we have computed $E_2(N)$ in the wider range $2\le N\le 2000$.
In Figures~\ref{fig:1.7},\ref{fig:1.76},\ref{fig:1.8} below
we have plotted the values $E_2(N)/(\log N)^c$ in the same range
with the choices $c=1.74$, $1.762$ and $1.78$, respectively 
(note that the scales are different along the vertical axes).  
These 
data suggest that $(\log E_2(N))/\log\log N$ might tend to a constant
$c\in(1.74,1.78)$ as $N\to\infty$.

\begin{figure}[H]
\begin{center}
\includegraphics*[width=3.25in]{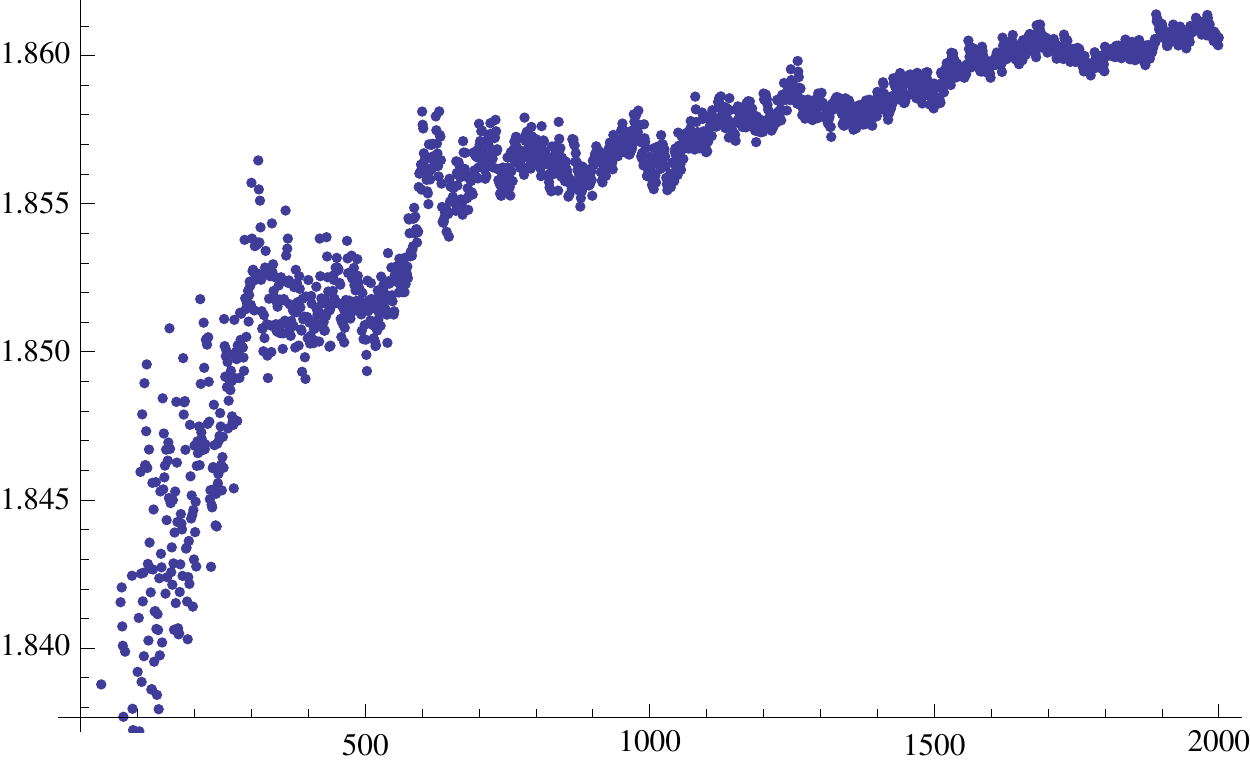}
\end{center}
\vspace*{-20pt}\caption{Values $E_2(N)/(\log N)^{1.74}$ with $2\le N\le 2000$}
\label{fig:1.7}
\end{figure}

\bigskip

\begin{figure}[H]
\begin{center}
\includegraphics*[width=3.25in]{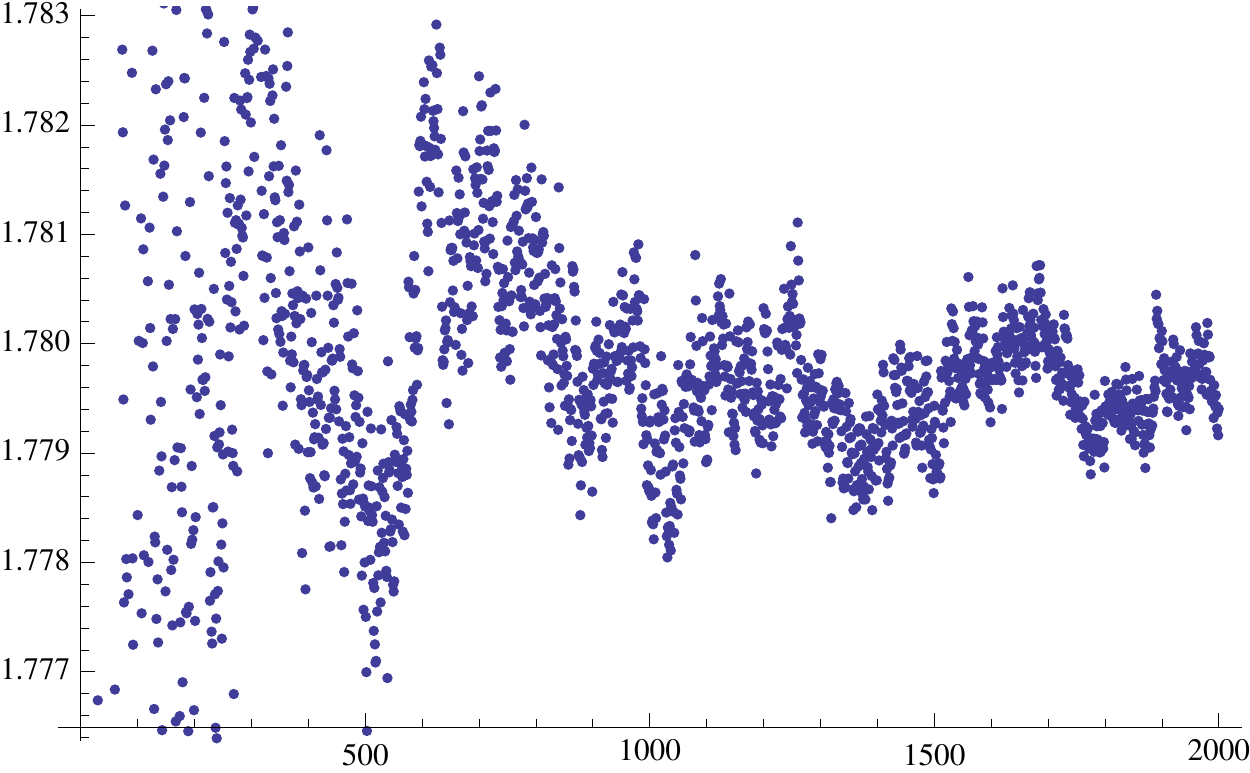}
\end{center}
\vspace*{-20pt}\caption{Values $E_2(N)/(\log N)^{1.762}$ with $2\le N\le 2000$}
\label{fig:1.76}
\end{figure}

\bigskip

\begin{figure}[H]
\begin{center}
\includegraphics*[width=3.25in]{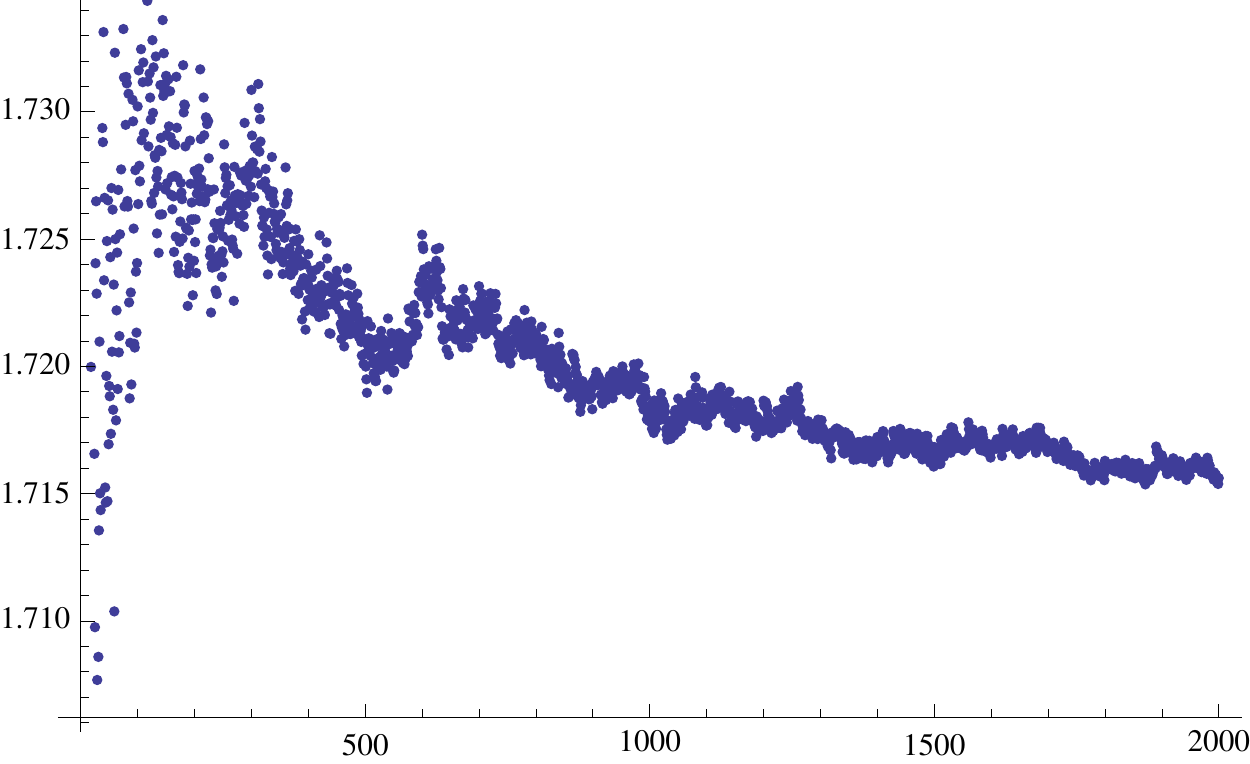}
\end{center}
\vspace*{-20pt}\caption{Values $E_2(N)/(\log N)^{1.78}$ with $2\le N\le 2000$}
\label{fig:1.8}
\end{figure}

To conclude this section, we provide Table~\ref{tab:modulus}
which, for any $n$ in the range $3\le n\le 40$,
give the modulus $q$ for which $A(n)=\Gnq/q^{1-1/n}$.

\begin{table}[H]
  \centering
\begin{tabular}{|c|c||c||c|c|}
\hline
$n$ & $q$ & & $n$ & $q$ \\
\hline
3 & 767484081 & & 22 & 1097192 \\
4 & 724880 & & 23 & 6533 \\
5 & 24816275 & & 24 & 11089264062240 \\
6 & 4606056 & & 25 & 1892365050125 \\
7 & 61103 & & 26 & 888749368 \\
8 & 35360 & & 27 & 122723007004143 \\
9 & 2302452243 & & 28 & 102143565680 \\
10 & 170568200 & & 29 & 59 \\
11 & 1541 & & 30 & 2221907019757425 \\
12 & 2343607353360 & & 31 & $\cdots\quad\cdots$ \\
13 & 4187 & & 32 & 2647898240 \\
14 & 488824 & & 33 & 26150655643931 \\
15 & 166568008135529 & & 34 & 14111 \\
16 & 6859840 & & 35 & 261183353167 \\
17 & 103 & & 36 & 766359604548720 \\
18 & 109951162776 & & 37 & 33227 \\
19 & $\cdots\quad\cdots$ & & 38 & 229 \\
20 & 75391144400 & & 39 & 728740376003003\\
21 & 2198500788029 & & 40 & 36338531600800 \\ \hline
\end{tabular}
\caption{Extreme moduli for $3\le n\le 40$}
\label{tab:modulus}
\end{table}

We remark that since $A(19)=A(31)=1$ we have
$A(19)=G_{19}(p^{18})$ for any prime $p\ne 19$
and $A(31)=G_{31}(p^{30})$ for any prime
$p\ne 31$; see the justification of~\eqref{eq:A1}.

\end{document}